\newcommand{\rank}{\mathrm{rank}}
\newcommand{\diag}{\mathrm{diag}}
\newtheorem{theorem}{Theorem}[section]
\newtheorem{proposition}[theorem]{Proposition}
\newtheorem{lemma}[theorem]{Lemma}
\newtheorem{definition}[theorem]{Definition}
\newtheorem{conjecture}[theorem]{Conjecture}
\newtheorem{example}[theorem]{Example}
\renewcommand{\caption}[1]{\singlespacing\hangcaption{#1}\normalspacing}
\title {The Tensor Rank Problem over the Quaternions}
\author {YG Liang, Sergio Da Silva, Yang Zhang}
\keywords{Tensor rank, Quaternions, Tensor decomposition}
\subjclass{14N07, 15A69, 11R52}
\address{YG Liang, University of Manitoba, Winnipeg MB}
\email{liangy1@myumanitoba.ca}
\address{Sergio Da Silva, McMaster University, Hamilton ON}
\email{smd322@cornell.edu, dasils19@mcmaster.ca}
\address{Yang Zhang, University of Manitoba, Winnipeg MB}
\email{Yang.Zhang@umanitoba.ca}
\thanks{Research supported in part by a PIMS postdoctoral fellowship, Canada NSERC and UM Interdisciplinary/New
Directions Research Collaboration Initiation Grants}
\begin{document}

\begin{abstract}
We provide a nontrivial bound on the rank of any tensor $T$ over the quaternions $\mathbb{H}$ in the $n_1\times n_2\times n_3$ cases where $2\leq n_i\leq 3$. We describe a decomposition of $T$ into $3$ simple tensors in the $2\times 2\times 2$ case. We also show that the upper bound is the best possible for some of the cases, and we provide various partial results involving tensor decompositions over $\mathbb{C}$ and $\mathbb{H}$.
\end{abstract}\bigskip

\maketitle

Tensors, as generalizations of matrices to higher dimensions, have many applications in various settings, such as aerospace engineering (\cite{doostan2007least}), signal processing
(\cite{comon2002tensor, de2007tensor, de2009survey}), data mining (\cite{liu2005text,   sun2006window}), machine learning (\cite{RSG}), computer vision (\cite{shashua2005non, vasilescu2002multilinear, vasilescu2003multilinear}), higher-order statistics (\cite{comon1994independent, comon1996decomposition}), pattern
recognition (\cite{kim2007nonnegative, savas2007handwritten}), chemometrics  (\cite{comon2009tensor,smilde2005multi}), graph analysis (see \cite{kolda2005higher}), numerical linear algebra
(\cite{de2000multilinear,de2000best,kolda2001orthogonal}), numerical analysis (Part I in \cite{Landsberg}), etc.

There have been extensive studies for tensor decompositions and tensor ranks. An overview of the theoretical developments and applications of tensor decompositions can be found in \cite{kolda2009tensor}. For any given tensor, finding a decomposition or determining its rank can generally be a difficult question. In contrast to tensor decomposition and tensor rank problems over conventional algebras where there are some known results for specific cases over the complex and real numbers, the tensor decomposition and tensor rank problems over the real quaternion algebra
\[\mathbb{H} = \{a_0+a_1i+a_2j+a_3k|i^2=j^2=k^2=ijk=-1; a_0,a_1,a_2,a_3\in\mathbb{R}\}.\]
are at present far from fully developed and remain a largely open question. In this article, we will consider tensors of the form
\[\mathbb{H}^{n_1}\otimes_{\mathbb{H}}\mathbb{H}^{n_2}\otimes_{\mathbb{H}}\mathbb{H}^{n_3},\]
which is the set of multiway arrays in $\mathbb{H}^{n_1\times n_2\times n_3}$ that has an $M_{n_1}(\mathbb{H})\times M_{n_2}(\mathbb{R})\times M_{n_3}(\mathbb{H})$ action defined by $(a_1,a_2,a_3)\cdot (h_1,h_2,h_3) = (a_1h_1,a_2h_2,h_3a_3)$. In other words, we are considering $(M_{n_1}(\mathbb{H}),M_{n_3}(\mathbb{H}))$-bimodules and have an additional real action on frontal slices. The tensor rank problem asks how to minimally decompose $T\in \mathbb{H}^{n_1}\otimes_{\mathbb{H}}\mathbb{H}^{n_2}\otimes_{\mathbb{H}}\mathbb{H}^{n_3}$ into a sum of simple tensors. This can involve determining the rank of $T$, but also includes questions about the existence and uniqueness of some minimal decomposition. We address some of these questions for the cases where $n_i \leq 3$. 

Knowing how to check the rank of a given tensor and finding a minimal decomposition into simple tensors is not just of theoretical importance, but has many real life applications as well. Such results can be useful for questions arising in applied mathematics, engineering, physics and computer science. For example, Sylvester-type equations for tensors usually involves assumptions that depend on tensor rank. Not all problems are commutative in nature however, so understanding tensors in the noncommutative case is also important. 

There are very few results involving tensor decompositions  over $\mathbb{H}$, at least in comparison to known results over $\mathbb{C}$ and $\mathbb{R}$. In \cite{Rank2}, upper bounds for the rank of tensors with size $2\times\cdots\times 2$ over $\mathbb{R}$ or $\mathbb{C}$ are studied. For instance, the maximal rank of any real $2\times 2\times 2\times 2$ tensor is 5, while the maximal rank for a complex tensor of the same size is 4 (these bounds were shown earlier in \cite{Kong-Jiang} and \cite{Brylinski} respectively). In some recent work involving $\mathbb{H}$, a simulatneous diagonalization result in \cite{Zhang2018} produces solutions to a specific generalized Sylvester quaternion matrix equation, while expanded work in \cite{Zhang2019} provides solutions for a two-sided coupled Sylvester-type equation in a similar setting. These results however assume that the rank of the tensors being used are known. It is therefore essential to find ways of determining the rank of tensors over $\mathbb{H}$ if one hopes to utilize these results. Our goal is to provide explicit criteria which can easily be checked via a computer program, as opposed to criteria which theoretically determines the rank of a given tensor.

Our results begin by analyzing the $2\times 2\times 2$ case. It is known that over the complex and real numbers, the maximum rank of any tensor is 3 (see \cite{SMS}). In the quaternion case, we show that maximum rank is also 3, and we provide a convenient decompositon into simple tensors. We will then provide a bound on the rank for tensors over $\mathbb{H}$ in the remaining $n_1\times n_2\times n_3$ cases, where $2\leq n_i\leq 3$. Finally, we provide results involving the decompositions for some these tensors, as well as examples of tensors which achieve the maximal bound. The results on tensor rank bounds can be summarized with the following theorem.

\begin{theorem}
Let $T\in \mathbb{H}^{n_1}\otimes_{\mathbb{H}}\mathbb{H}^{n_2}\otimes_{\mathbb{H}}\mathbb{H}^{n_3}$ where $2\leq n_i\leq 3$. Then in the $2\times 2\times 2$ case, there exists an explicit decompositon of $T$ into $3$ simple tensors, so $\rank(T)\leq 3$. Furthermore, $\rank(T)\leq 3$ if $n_i=3$ for exactly one $i$, $\rank(T)\leq 4$ if $n_i=2$ for exactly one $i$, and $\rank(T)\leq 6$ in the $3\times 3\times 3$ case.
\end{theorem}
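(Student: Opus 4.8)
The plan is to treat the $2\times 2\times 2$ case as the base and to reduce every larger case either to it or to a matrix-pencil analysis, always keeping track of the bimodule structure. For the base case I would first exploit the $\mathrm{GL}_2(\mathbb{H})\times \mathrm{GL}_2(\mathbb{R})\times \mathrm{GL}_2(\mathbb{H})$ action to bring $T$ into a canonical form. Slicing $T$ along the real mode of dimension $2$ presents it as a pencil $(A,B)$ of $2\times 2$ quaternion matrices, and a rank-$1$ term $u\otimes(\alpha,\beta)\otimes w$ contributes $\alpha\,uw$ to $A$ and $\beta\,uw$ to $B$ with $\alpha,\beta\in\mathbb{R}$ (this is where the real action on frontal slices enters); hence $\rank(T)$ is the least number of rank-$1$ quaternion matrices that simultaneously span $A$ and $B$ using real coefficients. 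After normalizing, say $A$ invertible and reduced to the identity, the problem becomes the canonical form of a single quaternion matrix $B$ under conjugation. Using the complex adjoint embedding $M_n(\mathbb{H})\hookrightarrow M_{2n}(\mathbb{C})$ I would invoke the quaternion Jordan form: either $B$ is diagonalizable over $\mathbb{H}$, giving a $2$-term decomposition, or $B$ is a single defective block, for which I would write down an explicit third rank-$1$ correction term. This yields the stated explicit decomposition into at most $3$ simple tensors.

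For the cases with exactly one index equal to $3$ (bound $3$) and exactly one index equal to $2$ (bound $4$), the same pencil correspondence applies whenever some mode has dimension $2$. In the $3\times 2\times 2$ situation I would view $T$ as a pencil of $3\times 2$ quaternion matrices and reduce it to a Kronecker-type canonical form over $\mathbb{H}$; each regular block contributes its size, the minimal-index singular blocks are cheap, and one checks the total never exceeds $3$. In the $n\times n\times 2$ situations with $n=3$, simultaneous reduction of the pencil $(A,B)$ (in the spirit of the cited quaternion simultaneous-diagonalization results) yields a diagonalizable part whose rank equals its size together with at most one defective $2\times 2$ Jordan block costing $3$; summing gives at most $4$, matching $\lfloor 3n/2\rfloor$ at $n=3$. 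Throughout I must track which mode carries the real action, since the canonical forms available over the two $\mathbb{H}$-modes differ from those over the real mode, and a rank-$1$ term is constrained to have a real mode-$2$ factor.

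For the $3\times 3\times 3$ case I would argue by slicing and subadditivity of tensor rank: after a normalizing change of basis, peel off a piece equivalent to a $3\times 3\times 2$ tensor (rank $\le 4$) while arranging the complementary $3\times 3\times 1$ slice to have matrix rank $\le 2$, or symmetrically split $T$ into two pieces each of rank $\le 3$; either way the two ranks add to at most $6$.

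I expect the main obstacle to be twofold. First, establishing the correct canonical form for quaternion matrix pencils: unlike over $\mathbb{C}$, eigenvalues of quaternion matrices occur in conjugacy classes, so the Jordan and Kronecker machinery must be rebuilt through the complex adjoint, and the real-only action on mode $2$ obstructs symmetrizing the three factors. Second, obtaining the sharp constant $6$ rather than the naive $4+3=7$ in the $3\times 3\times 3$ case: this requires a genuinely efficient split in which the normalization used to bound one block simultaneously reduces the complementary slice, and verifying that such a split always exists over $\mathbb{H}$ is the delicate point.
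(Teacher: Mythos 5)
Your proposal has genuine gaps, and the central one is structural. The pencil-and-canonical-form strategy you describe is only available for the shapes $n_1\times 2\times n_3$, where the dimension-$2$ mode is the one carrying the \emph{real} action: there the two frontal slices $(A,B)$ are mixed by real coefficients while $GL_{n_1}(\mathbb{H})$ and $GL_{n_3}(\mathbb{H})$ act on the two sides, so normalizing to $(I,A^{-1}B)$ and applying quaternion similarity is legitimate. This plausibly covers $2\times2\times2$ (where your Jordan dichotomy is a genuine alternative to the paper's explicit decomposition in Proposition \ref{222decomp}), as well as $2\times2\times3$, $3\times2\times2$ and $3\times2\times3$. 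It does \emph{not} cover $2\times3\times2$, $2\times3\times3$ or $3\times3\times2$, where the real mode has dimension $3$: slicing along a dimension-$2$ mode there produces slices one of whose sides admits only real operations, so conjugation is not rank-preserving (to keep the identity slice fixed while sending the other slice $M\mapsto P^{-1}MP$, the matrix $P$ is forced to be real), and no Jordan or Kronecker reduction can be run. These are exactly the cases where the paper works hardest: $2\times3\times2$ needs unitary triangularization plus solvability of the quaternion quadratic $x^{2}+\alpha x+x\beta-\gamma=0$ (Theorem \ref{232quaternion}), and $2\times3\times3$/$3\times3\times2$ are handled by explicit slice operations and a left-eigenvalue argument (Theorem \ref{233quaternion}). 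Your proposal is silent on $2\times3\times2$, your ``simultaneous reduction'' for the $3\times3\times2$ case presumes an equivalence group the tensor does not have, and your Jordan bookkeeping also omits the single defective $3\times3$ block (still of cost at most $4$, but not of the form ``diagonalizable part plus one $2\times2$ block''). Moreover, your $3\times3\times3$ argument inherits this gap: to make the complementary slice singular you need a \emph{quaternion} combination of slices (real combinations $C+\lambda A+\mu B$ need not ever become singular), which forces you to slice along a quaternion mode, so the rank-$\le 4$ piece you peel off is a $2\times3\times3$ or $3\times3\times2$ tensor --- precisely a shape your method cannot bound.

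Two further points. First, you misidentify the rank-one tensors: by the paper's definition of a simple tensor and by Lemma \ref{dia1}, the mode-$2$ factor of a rank-one term is an arbitrary quaternion vector, sandwiched in the middle of the product ($A_{k}=\sum_i \vec a_i\, b_{ik}\, \vec c_i^{\,T}$); it is only the slice-mixing \emph{operations} that are real. Your asserted identity ``$\rank(T)$ equals the least number of rank-one matrices spanning $A$ and $B$ with real coefficients'' is therefore false --- the tensor $(I;iI)$ has rank $2$, yet no two rank-one quaternion matrices real-span both $I$ and $iI$ --- and your own argument violates the restriction you impose, since in the diagonalizable case the mode-$2$ factors are the (generally non-real) quaternion eigenvalues. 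Because restricting factors can only overcount, this does not by itself invalidate upper bounds, but as written you prove bounds for one notion of rank while using decompositions legal only for the other. Second, the step in the $3\times3\times3$ case that you yourself call ``the delicate point'' --- that one slice can always be replaced by a singular combination of matrix rank $\le 2$ --- is exactly the paper's Lemma \ref{sing}: for invertible $A$ there exists $x_{0}\in\mathbb{H}$ with $x_{0}A+C$ singular, which holds because every square quaternion matrix has a left eigenvalue. Without proving this lemma, and without a valid $\le 4$ bound for the complementary piece, your $4+2=6$ accounting does not close.
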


\section{Preliminaries}\label{definitions}

A multiway array $T=(T_{i_{1}i_{2}\ldots i_{K}})$ where $1\leq i_{1}\leq N_{1},\ldots, 1\leq i_{K}\leq N_{K}$ is called a $K$-way tensor of size $(N_{1},N_{2},\ldots,N_{K})$. We also say that $T$ is an $N_{1}\times N_{2}\times\ldots\times N_{K}$ tensor. When $k=3$, we will use the convention that $N_2$ indicates the number of \textit{frontal slices} of the array, so that the array consists of $N_2$ many $N_1\times N_3$ matrices. Similarly, it will have $N_1$ \textit{horiztonal slices} and $N_3$ \textit{lateral slices}. For example a $3\times 2\times 3$ tensor has 18 entries, which can be denoted by

\[
   T=\left(
  \begin{bmatrix}
    a_{111} & a_{121} & a_{131}\\
   a_{211} & a_{221}  & a_{231}\\
   a_{311} & a_{321}  & a_{331}\\
  \end{bmatrix};
  \begin{bmatrix}
    a_{112} & a_{122} & a_{132}\\
   a_{212} & a_{222}  & a_{232}\\
   a_{312} & a_{322}  & a_{332}\\
  \end{bmatrix}\right)
\]
or
\[
   T=\left(
  \begin{bmatrix}
\vec{a}_{11} &  \vec{a}_{12}&  \vec{a}_{13}\\
\vec{a}_{21} &  \vec{a}_{22}&  \vec{a}_{23}\\
\vec{a}_{31} &  \vec{a}_{32}&  \vec{a}_{33}\\
  \end{bmatrix}\right) ,
\]
where $\vec{a}_{ij}=(a_{ij1},a_{ij2})$, $i,j=1,2,3.$ Here the first lateral slice is the $3\times 2$ matrix defined by the vectors $\vec{a}_{i1}$, and the first horizontal slice is the $2\times 3$ matrix defined by the $\vec{a}_{1j}$. \newline

\noindent\textbf{Note:} We should mention that this notation is different from the one used by some authors cited in this article (for example, in \cite{SMS2}, $N_1$ indicates the number of frontal slices).

\begin{definition}
A nonzero $K$-tensor $T=(T_{i_{1}i_{2}\ldots i_{K}})$ is called a \textbf{simple tensor} if there exist vectors 
\begin{align*}
&\vec{a_{1}}=(a_{11},a_{12},\ldots,a_{1N_{1}}),\\
&\vec{a_{2}}=(a_{21},a_{22},\ldots,a_{2N_{2}}),\\
& \hspace{2.5cm}\vdots\\
&\vec{a_{K}}=(a_{K1},a_{K2},\ldots,a_{KN_{K}}),
\end{align*}
such that $T=(T_{i_{1}i_{2}\ldots i_{K}})=(a_{1i_{1}}a_{2i_{2}}\ldots a_{Ki_{K}})$. We will also denote this by 
\[T=\vec{a}_{1}\otimes \vec{a}_{2}\otimes\dots\otimes \vec{a}_{K}.\]
\end{definition}

\begin{example}\normalfont
For the $2\times 3\times 2$ tensor
\[
   T=\left(
  \begin{bmatrix}
    2 & 3  \\
   8 & 12  \\
  \end{bmatrix};
    \begin{bmatrix}
    -2 & -3  \\
   -8 & -12  \\
  \end{bmatrix};
    \begin{bmatrix}
    4 & 6  \\
   16 & 24  \\
  \end{bmatrix}\right) ,
\]
there exist vectors \[\vec{a}=(a_{1},a_{2})=(1,4),\hspace{2mm} \vec{b}=(b_{1},b_{2},b_{3})=(1,-1,2) \text{ and } 
\vec{c}=(c_{1},c_{2})=(2,3)\] such that 
\begin{align*}
   T&=\left(\begin{bmatrix}
    1\times 1\times 2 & 1\times 1\times 3  \\
     4\times 1\times 2 & 4\times 1\times 3  \\
  \end{bmatrix};
    \begin{bmatrix}
    1\times (-1)\times 2 & 1\times (-1)\times 3  \\
     4\times (-1)\times 2 & 4\times (-1)\times 3  \\
  \end{bmatrix};
    \begin{bmatrix}
    1\times 2\times 2 & 1\times 2\times 3  \\
     4\times 2\times 2 & 4\times 2\times 3  \\
  \end{bmatrix}\right) \\
    &=\left(\begin{bmatrix}
    a_{1}b_{1}c_{1} & a_{1}b_{1}c_{2}  \\
    a_{2}b_{1}c_{1} & a_{2}b_{1}c_{2}  \\
  \end{bmatrix};
    \begin{bmatrix}
    a_{1}b_{2}c_{1} & a_{1}b_{2}c_{2}  \\
    a_{2}b_{2}c_{1} & a_{2}b_{2}c_{2}  \\
  \end{bmatrix};
    \begin{bmatrix}
    a_{1}b_{3}c_{1} & a_{1}b_{3}c_{2}  \\
    a_{2}b_{3}c_{1} & a_{2}b_{3}c_{2}  \\
  \end{bmatrix}\right) \\
  &=  \vec{a}\otimes \vec{b}\otimes \vec{c}.
\end{align*}
Therefore, $T$ is a simple tensor. \hfill$\Box$
\end{example}
\begin{definition}
Let $T$ be a nonzero $K$-tensor. Then the rank of $T$ is the smallest positive integer $n$ such that  $T=T_{1}+T_{2}+\ldots+T_{n}$ where  $T_1,T_2,\ldots, T_n$ are simple $K$-tensors. We will say that $T$ has rank $n$ and denote this by $\rank(T)=n$. 
\end{definition}
An immediate consequence of this definition is that $\rank(T+S) \leq \rank(T) + \rank(S)$. Any sum of simple tensors $T_1+T_2+\ldots+T_n = T$ is a called \textbf{tensor decomposition} for $T$, even if $n$ is not minimal. We will highlight various nontrivial tensor decompositions in subsequent sections where $n$ is always equal to the minimal known bound on tensor rank. It should be noted that some authors refer to a tensor decomposition in the singular value decomposition sense, and not as a sum of simple tensors (see \cite{HNW} for this type of tensor decomposition in the quaternion case).

In many of the results in this article, it is desirable to first simplify a tensor $T=(A_1;\ldots;A_k)$ by first applying column and row operations to the matrices $A_i$ which preserve rank. Since we are working over a noncommutative division ring, we need to be careful with how $\mathbb{H}$ is acting in a column or row operation (see \cite[Section 1.3.3]{Widdows} for an exposition on the difficulties of defining tensors over the quaternions).  For example, in applying the two column operations shown below, we have increased the rank of the matrix:

\[
  \begin{bmatrix}
    i & i  \\
   i+j & i+j  \\
  \end{bmatrix}\xrightarrow[C_1\rightarrow jC_1]{C_2\rightarrow C_2j}
  \begin{bmatrix}
   -k & k  \\
   -1-k & -1+k  \\
  \end{bmatrix}.
  \]
  
 \hspace{5mm}
  
If we however only act by $\mathbb{H}$ on the left when using row operations (ie. horizontal slice operations), and by $\mathbb{H}$ on the right for column operations (ie. lateral slice operations), then one can check that the rank is preserved. In particular, we are actually endowing $\mathbb{H}^{n_1\times\ldots\times n_k}$ with a bimodule structure where multiplation takes place on the left and right by nonsingular quaternionic matrices. Using real frontal slice operations is also allowed since $\mathbb{R}$ is the center of $\mathbb{H}$. We will call each of these \textbf{rank-preserving operations}. In the latter case for example, let $A_{i}$ be an $N_{i}\times M_{i}$ matrix with entries in $\mathbb{R}$ for $i=1,\ldots,p$. Consider the multilinear map defined by
\begin{align*}
    A_{1}\otimes \ldots \otimes A_{p}: \mathbb{H}^{N_{1}\times \ldots\times N_{p}}&\longrightarrow \mathbb{H}^{M_{1}\times \ldots\times M_{p}},\\
    \vec{v}_{1}\otimes \ldots \otimes \vec{v}_{p}& \mapsto A_{1}\vec{v}_{1}\otimes \ldots \otimes A_{p}\vec{v}_{p}.
\end{align*}
This map is well-defined since the action is linear in each component. When we apply $A_{1}\otimes \ldots \otimes A_{p}$ to $T$, we can bound the rank of the resulting image by $\rank(T)$.

\begin{lemma}\label{multilinear}
Let $T$ be an $N_{1}\times\ldots\times N_{p}$ tensor and $A_{1}\otimes \ldots \otimes A_{p}$ a multilinear map from $\mathbb{H}^{N_{1}\times \ldots\times N_{p}}$ to $\mathbb{H}^{M_{1}\times \ldots\times M_{p}}$ defined as above. Then $\rank((A_{1}\otimes \ldots \otimes A_{p})(T))\leq \rank(T)$. Furthermore, if $A_{1}\otimes \ldots \otimes A_{p}$ is invertible, then $\rank((A_{1}\otimes \ldots \otimes A_{p})(T))=\rank(T)$.
\end{lemma}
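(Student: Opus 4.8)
The plan is to exploit a single structural fact, namely that the map $\Phi := A_1 \otimes \cdots \otimes A_p$ carries simple tensors to simple tensors (or to zero), and then to push a minimal decomposition of $T$ through $\Phi$ using additivity. Everything else is bookkeeping.

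First I would record the behaviour of $\Phi$ on a decomposable tensor. If $S = \vec{v}_1 \otimes \cdots \otimes \vec{v}_p$ is simple, then by definition $\Phi(S) = A_1\vec{v}_1 \otimes \cdots \otimes A_p\vec{v}_p$, which is again simple whenever every $A_i\vec{v}_i \neq 0$, and equals $0$ as soon as some $A_i\vec{v}_i = 0$. In either case $\rank(\Phi(S)) \leq 1$. This is the only point at which the explicit form of $\Phi$ enters. Next, I would write $n = \rank(T)$ and fix a minimal decomposition $T = S_1 + \cdots + S_n$ into simple tensors. Since $\Phi$ is the linear extension of a multilinear map and hence additive (its well-definedness was noted just before the statement), we get $\Phi(T) = \Phi(S_1) + \cdots + \Phi(S_n)$. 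By the previous observation each $\Phi(S_\ell)$ is simple or zero; discarding the zero terms exhibits $\Phi(T)$ as a sum of at most $n$ simple tensors, so $\rank(\Phi(T)) \leq n = \rank(T)$, which is the first assertion.

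For the equality statement I would argue that when $\Phi$ is invertible the inverse map has the same tensor-product shape. Invertibility forces each $A_i$ to be a nonsingular square matrix, and one checks on decomposable tensors that $(A_1^{-1} \otimes \cdots \otimes A_p^{-1}) \circ \Phi$ acts as the identity, since $A_i^{-1}A_i\vec{v}_i = \vec{v}_i$ in each slot; because simple tensors span the space and both maps are additive, $\Phi^{-1} = A_1^{-1} \otimes \cdots \otimes A_p^{-1}$ is again a multilinear map of the type covered by the first part. Applying the inequality to $\Phi$ and then to $\Phi^{-1}$ yields $\rank(\Phi(T)) \leq \rank(T)$ together with $\rank(T) = \rank(\Phi^{-1}(\Phi(T))) \leq \rank(\Phi(T))$, whence equality.

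The hard part will be the invertibility direction: over the noncommutative ring $\mathbb{H}$ one must be careful that ``$\Phi$ invertible'' genuinely descends to invertibility of each factor $A_i$ and that $\Phi^{-1}$ retains the tensor-product form, rather than being a more general linear map to which the first part would not apply. The cleanest route is to verify the factorwise identities on simple tensors and invoke additivity, sidestepping any coordinate computation; I would also confirm that a tensor product of nonzero vectors is itself nonzero, so that a nontrivial kernel in some $A_i$ would produce a nonzero $S$ with $\Phi(S)=0$ and contradict injectivity of $\Phi$. This last point is where the quaternionic bimodule structure, with left action on horizontal slices and right action on lateral slices, needs a brief explicit check.
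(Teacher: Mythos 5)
Your argument for the inequality is essentially the paper's own: fix a minimal decomposition $T=S_1+\cdots+S_n$, push it through $\Phi=A_1\otimes\cdots\otimes A_p$ by additivity, and observe that each image term has rank at most one; your only refinement is explicitly discarding terms with some $A_i\vec{v}_i=0$, a point the paper glosses over. Where you genuinely differ is the ``furthermore'' clause: the paper's proof stops after the inequality and never argues the invertible case at all, whereas you supply that argument, and it is sound --- this buys a complete proof of the stated lemma rather than of its first half. Two details in your sketch deserve to be written out. First, your kernel argument (a simple tensor of nonzero vectors is nonzero because $\mathbb{H}$ is a division ring, so a nontrivial kernel in some $A_i$ would contradict injectivity of $\Phi$) shows only that each $A_i$ is injective, i.e.\ $N_i\le M_i$; to conclude that each $A_i$ is \emph{square}, hence invertible, you also need the dimension count $\prod_i N_i=\prod_i M_i$ forced by invertibility of $\Phi$ (or a matching surjectivity argument). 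Second, when extending the identity $(A_1^{-1}\otimes\cdots\otimes A_p^{-1})\circ\Phi=\id$ from simple tensors to the whole space by additivity, note that simple tensors additively span $\mathbb{H}^{N_1\times\cdots\times N_p}$ because any quaternion coefficient can be absorbed into one of the vectors, and that the $A_i^{-1}$ are again real matrices, so $A_1^{-1}\otimes\cdots\otimes A_p^{-1}$ is a map of the type covered by the first part. With those sentences added, your proof is complete and strictly more thorough than the one in the paper.
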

\begin{proof}
Suppose $\rank(T)=n$, and write $T$ as a sum of simple tensors
\[ T=\displaystyle\sum_{i=1}^{n}v_{i_1}\otimes \ldots \otimes v_{i_p}.\]
By multilinearity, we have
 \[\displaystyle (A_{1}\otimes \ldots \otimes A_{p})(T)=\sum_{i=1}^{n}A_{1}v_{i_1}\otimes \ldots \otimes A_{p}v_{i_p},\]
which implies $\rank((A_{1}\otimes \ldots \otimes A_{p})(T))\leq n$.
\end{proof}

A similar proof works if we replace $\mathbb{H}$ with $\mathbb{C}$. Allowing the $A_i$ to have entries in $\mathbb{H}$ would no longer define a multilinear map. However, using row operations with left $\mathbb{H}$ multiplication (or column operations with right $\mathbb{H}$ mutliplication) coming from nonsingular matrices does preserve rank. 

Let us fix the notation for the adjoint of a quaternion matrix. Given an $n\times n$ matrix $A$ with entries in $\mathbb{H}$, we can uniquely write $A = A_1+A_2j$, where $A_1$ and $A_2$ are $n\times n$ matrices with entires in $\mathbb{C}$. The \textbf{complex adjoint matrix} of $A$ (or simply the adjoint of $A$), is defined as the $2n\times 2n$ complex block matrix

\[\chi_A=
\left[
\begin{array}{c c}
A_1 & A_2

\vspace{2mm}\\

-\overline{A_2} & \overline{A_1}\\
\end{array}
\right].
\]

\vspace{3mm}

\noindent The adjoint matrix is very useful in converting a diagonalization problem over $\mathbb{H}$ into a diagonalization problem over $\mathbb{C}$. The following result is well-known (\cite{Rodman}).

\begin{lemma}\label{adjoint}
An $n\times n$ matrix $A$ with entries in $\mathbb{H}$ is diagonalizable if and only if $\chi_A$ is diagonalizable.
\end{lemma}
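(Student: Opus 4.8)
The plan is to use that the complex-adjoint map $\chi\colon M_n(\mathbb{H})\to M_{2n}(\mathbb{C})$, $A\mapsto\chi_A$, is an injective $\mathbb{R}$-algebra homomorphism: one checks directly from the definition that $\chi_{A+B}=\chi_A+\chi_B$, $\chi_{AB}=\chi_A\chi_B$ and $\chi_{I_n}=I_{2n}$, so in particular $A$ is invertible over $\mathbb{H}$ if and only if $\chi_A$ is invertible, with $\chi_{A^{-1}}=(\chi_A)^{-1}$. Here ``$A$ diagonalizable over $\mathbb{H}$'' means there is an invertible $P\in M_n(\mathbb{H})$ with $P^{-1}AP=D$ diagonal; writing $P=[p_1\,\cdots\,p_n]$ by columns, this is equivalent to $\mathbb{H}^n$ having a basis $p_1,\dots,p_n$ of right eigenvectors, i.e. $Ap_\ell=p_\ell d_\ell$. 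I would prove the two implications separately, the forward one being a formal consequence of the homomorphism property and the reverse one requiring an eigenvector correspondence.

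For the forward direction, applying $\chi$ to $P^{-1}AP=D$ gives $(\chi_P)^{-1}\chi_A\chi_P=\chi_D$, so it suffices to show $\chi_D$ is diagonalizable over $\mathbb{C}$. Writing $D=D_1+D_2 j$ with $D_1,D_2$ diagonal complex matrices, the matrix $\chi_D=\left[\begin{smallmatrix}D_1 & D_2\\ -\overline{D_2}&\overline{D_1}\end{smallmatrix}\right]$ is, after the permutation grouping index $\ell$ with index $n+\ell$, block diagonal with $2\times 2$ blocks $\left[\begin{smallmatrix} a_\ell & b_\ell\\ -\overline{b_\ell} & \overline{a_\ell}\end{smallmatrix}\right]$, where $d_\ell=a_\ell+b_\ell j$. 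Each such block has real characteristic polynomial $\lambda^2-2\,\mathrm{Re}(a_\ell)\lambda+(|a_\ell|^2+|b_\ell|^2)$ with nonpositive discriminant, so its two eigenvalues are either distinct (when $d_\ell\notin\mathbb{R}$) or the block is already a real scalar matrix (when $d_\ell\in\mathbb{R}$); in both cases the block is diagonalizable, hence so is $\chi_D$ and therefore $\chi_A$.

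For the reverse direction I would set up the bijection $\psi\colon\mathbb{H}^n\to\mathbb{C}^{2n}$ sending $v=x+yj$ (with $x,y\in\mathbb{C}^n$) to $\left[\begin{smallmatrix}x\\ -\overline{y}\end{smallmatrix}\right]$. A short computation using $jz=\overline{z}j$ shows two things: that $\psi$ is additive with $\psi(v\mu)=\mu\,\psi(v)$ for $\mu\in\mathbb{C}$ (so $\psi$ is a $\mathbb{C}$-linear isomorphism when $\mathbb{H}^n$ is viewed as a right $\mathbb{C}$-space of dimension $2n$), and that $\psi(Av)=\chi_A\,\psi(v)$ for all $v$. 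Consequently, for $\lambda\in\mathbb{C}$ the equation $\chi_A\,\psi(v)=\lambda\,\psi(v)$ is equivalent, via injectivity of $\psi$ together with $\lambda\,\psi(v)=\psi(v\lambda)$, to the right-eigenvalue equation $Av=v\lambda$. Now assuming $\chi_A$ is diagonalizable, so that $\mathbb{C}^{2n}$ has a basis of eigenvectors of $\chi_A$, pulling this basis back through $\psi^{-1}$ produces vectors $v_1,\dots,v_{2n}\in\mathbb{H}^n$, each a right eigenvector of $A$ with complex eigenvalue, which span $\mathbb{H}^n$ over $\mathbb{C}$ and hence over $\mathbb{H}$. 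Since $\mathbb{H}$ is a division ring, a right-$\mathbb{H}$ spanning set of the free module $\mathbb{H}^n$ contains a basis of size $n$; choosing such a subset $p_1,\dots,p_n$ and taking them as the columns of $P$ yields an invertible $P$ with $P^{-1}AP$ diagonal, so $A$ is diagonalizable.

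The hard part will be getting the correspondence exactly right: the definition of $\psi$ must be chosen so that the anticommutation $jz=\overline{z}j$ simultaneously turns left multiplication by $A$ into multiplication by $\chi_A$ and turns right multiplication by a complex scalar into ordinary scalar multiplication — the signs and conjugates in the lower block are precisely what make both statements hold. The other delicate point is the dimension bookkeeping: $\chi_A$ acts on the $2n$-dimensional space $\mathbb{C}^{2n}$ while $A$ acts on the $n$-dimensional right $\mathbb{H}$-module $\mathbb{H}^n$, and the reverse implication hinges on the fact that a set spanning $\mathbb{H}^n$ over the subfield $\mathbb{C}$ automatically spans it over $\mathbb{H}$, so that an $\mathbb{H}$-basis of right eigenvectors can be extracted. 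One could alternatively just invoke the cited reference, but these two correspondences are what a self-contained argument rests on.
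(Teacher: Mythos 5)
Your proof is correct. The paper itself gives no argument for Lemma \ref{adjoint}: it is stated as well-known and attributed to \cite{Rodman}, so your write-up is a self-contained replacement for that citation rather than a variant of an argument appearing in the paper. Both halves check out. The multiplicativity of $\chi$ gives $(\chi_P)^{-1}\chi_A\chi_P=\chi_D$, and your block computation is right: after the grouping permutation, $\chi_D$ splits into $2\times 2$ blocks $\left[\begin{smallmatrix} a_\ell & b_\ell\\ -\overline{b_\ell} & \overline{a_\ell}\end{smallmatrix}\right]$ whose characteristic polynomials $\lambda^2-2\,\mathrm{Re}(a_\ell)\lambda+|a_\ell|^2+|b_\ell|^2$ have discriminant $-4\bigl(\mathrm{Im}(a_\ell)^2+|b_\ell|^2\bigr)\le 0$, vanishing precisely when $d_\ell$ is real, in which case the block is scalar; either way each block is diagonalizable. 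For the converse, your map $\psi(x+yj)=\left[\begin{smallmatrix}x\\-\overline{y}\end{smallmatrix}\right]$ does satisfy both intertwining identities $\psi(Av)=\chi_A\psi(v)$ and $\psi(v\mu)=\mu\psi(v)$ (both consequences of $jz=\overline{z}\,j$), so a $\mathbb{C}$-eigenbasis of $\chi_A$ pulls back to $2n$ right eigenvectors of $A$ with complex right eigenvalues, which span $\mathbb{H}^n$ over $\mathbb{C}$ and hence over $\mathbb{H}$; extracting an $\mathbb{H}$-basis from this spanning set (legitimate since $\mathbb{H}$ is a division ring) and assembling it into $P$ gives $P^{-1}AP$ diagonal. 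It is worth noting that this converse -- the direction requiring the eigenvector correspondence rather than purely formal algebra -- is exactly the direction the paper uses in the proof of Theorem \ref{323quaternion}, where diagonalizability of $\chi_M$ is invoked to conclude that $M$ is diagonalizable; so your argument supplies the substance behind the one step the paper outsources to the literature, at the cost of a page of computation that the citation avoids.
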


We conclude this section with two auxiliary lemmas which are important in the sections that follow.

\begin{lemma}\label{dia1}
Let $T=(A_{1};A_{2};\ldots;A_{p})$ be an $m \times p \times n$ tensor. Then $\rank(T)\leq r$ if and only if there are $r\times r$ diagonal matrices $D_{i}$, an $m\times r$ matrix $P$, and an $r\times n$ matrix $Q$ such that $A_{k}=PD_{k}Q,$ for $k=1,\ldots, p.$
\end{lemma}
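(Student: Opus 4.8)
The plan is to set up an explicit dictionary between decompositions of $T$ into simple tensors and factorizations of the frontal slices through diagonal matrices, and then to verify both directions of the equivalence by matching entries. The only point requiring genuine care is that we work over a noncommutative ring, so the order in which quaternions are multiplied in the product $PD_kQ$ must be reconciled with the order appearing in the entries of a simple tensor.

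First I would record how a single simple tensor looks slice-by-slice. If $\vec{a}\otimes\vec{b}\otimes\vec{c}$ is a simple tensor with $\vec{a}\in\mathbb{H}^m$, $\vec{b}\in\mathbb{H}^p$, $\vec{c}\in\mathbb{H}^n$, then its $k$-th frontal slice is the $m\times n$ matrix whose $(i,j)$ entry is $a_i b_k c_j$. The key observation is that this triple product, read in the fixed left-to-right order $a_i\cdot b_k\cdot c_j$, is exactly the $(i,j)$ entry of the matrix product $\vec{a}\,b_k\,\vec{c}^{\,T}$, where $\vec{a}$ is regarded as a column, $b_k$ as a $1\times 1$ scalar in the middle, and $\vec{c}^{\,T}$ as a row. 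Thus a single simple tensor already realizes the desired form with $r=1$, $P=\vec{a}$, $D_k=(b_k)$, and $Q=\vec{c}^{\,T}$. Crucially, because the scalar $b_k$ sits between the two vectors, we cannot in general pull it out to the front as we could over a commutative field; keeping it in the diagonal position is precisely what makes the statement correct over $\mathbb{H}$.

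For the forward direction, suppose $\rank(T)=s\leq r$ and write $T=\sum_{l=1}^{s}\vec{u}_l\otimes\vec{v}_l\otimes\vec{w}_l$. I would assemble $P$ as the $m\times r$ matrix whose $l$-th column is $\vec{u}_l$ for $l\leq s$ and zero otherwise, let $D_k$ be the $r\times r$ diagonal matrix with $(D_k)_{ll}=(v_l)_k$ for $l\leq s$ and $0$ otherwise, and let $Q$ be the $r\times n$ matrix whose $l$-th row is $\vec{w}_l^{\,T}$ for $l\leq s$ and zero otherwise. Because each $D_k$ is diagonal, the product unwinds as $(PD_kQ)_{ij}=\sum_{l=1}^{r}(u_l)_i(v_l)_k(w_l)_j$, and comparing with the entrywise expression for $T$ yields $A_k=PD_kQ$ for every $k$.

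For the converse I would run this construction backwards: given $A_k=PD_kQ$ with $P$ of size $m\times r$, each $D_k$ diagonal of size $r\times r$, and $Q$ of size $r\times n$, set $\vec{u}_l$ to be the $l$-th column of $P$, $\vec{w}_l^{\,T}$ the $l$-th row of $Q$, and $(v_l)_k=(D_k)_{ll}$. Expanding the diagonal product entrywise again gives $(A_k)_{ij}=\sum_{l=1}^{r}(u_l)_i(v_l)_k(w_l)_j$, which is exactly the statement that $T=\sum_{l=1}^{r}\vec{u}_l\otimes\vec{v}_l\otimes\vec{w}_l$; discarding any terms that happen to be zero exhibits $T$ as a sum of at most $r$ simple tensors, so $\rank(T)\leq r$. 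The main obstacle, such as it is, is entirely the noncommutativity bookkeeping in these two steps: one must confirm that matrix multiplication against a \emph{diagonal} matrix preserves the left-to-right factor order $u_i\,v_k\,w_j$, since any attempt to use a non-diagonal middle factor, or to commute scalars past the vectors, would break the correspondence.
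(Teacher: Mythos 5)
Your proposal is correct and follows essentially the same route as the paper: both directions hinge on the observation that the $k$-th frontal slice of $\vec{a}\otimes\vec{b}\otimes\vec{c}$ is $\vec{a}\,b_k\,\vec{c}^{\,T}$ with the scalar kept in the middle position, and both build $P$, $D_k$, $Q$ from the columns $\vec{a}_i$, the entries $b_{ik}$, and the rows $\vec{c}_i^{\,T}$ respectively. Your explicit zero-padding when $\rank(T)<r$ and the discarding of zero terms in the converse are minor tidy-ups of details the paper leaves implicit, not a different argument.
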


\begin{proof}
The following argument is a slight alteration from the one found in \cite[Proposition 2.1]{SMS2}. First suppose that $\rank(T)\leq r$, and write $T$ as a sum of simple tensors
\[\displaystyle T=\sum_{i=1}^{r} \vec{a_{i}}\otimes \vec{b_{i}}\otimes \vec{c_{i}}.\]
Let us write $\vec{b_{i}}=(b_{i1},b_{i2},\ldots, b_{ip})$ for $1\leq i\leq r$, so that  $\displaystyle A_{k}=\sum_{i=1}^{r} \vec{a_{i}}b_{ik} {\vec{c_{i}}}^{T}$.
We will define $P,Q$ and $D_k$ by 
 \[ P=[\vec{a_{1}},\vec{a_{2}},\ldots, \vec{a_{r}}]\text{,\hspace{4mm}}
   Q=
  \begin{bmatrix}
    {\vec{c_{1}}}^{T}  \\
   {\vec{c_{2}}}^{T}  \\
   \vdots  \\
   {\vec{c_{r}}}^{T}  
  \end{bmatrix}\text{,\hspace{4mm}}
  D_{k}=\diag(b_{1k},b_{2k},\ldots, b_{rk}).
\]
 Then
 \[
   PD_{k}Q=[\vec{a_{1}},\vec{a_{2}},\ldots, \vec{a_{r}}]
   \begin{bmatrix}
    b_{1k} & & \\
    &b_{2k} & & \\
    && \ddots & \\
    && & b_{rk}
  \end{bmatrix}
  \begin{bmatrix}
    {\vec{c_{1}}}^{T}  \\
   {\vec{c_{2}}}^{T}  \\
   \vdots  \\
   {\vec{c_{r}}}^{T}  
  \end{bmatrix}
  =\sum_{i=1}^{r} \vec{a_{i}}b_{ik} {\vec{c_{i}}}^{T}=A_{k},
\]
as required.

Proceeding similarly for the other direction, assume that there are $r\times r$ diagonal matrices $D_{k}$, an $m\times r$ matrix $P$, and an $r\times n$ matrix $Q$ such that $A_{k}=PD_{k}Q,$ for $k=1,\ldots, p.$
Then writing $P$, $Q$ and $D_k$ as above, we have:
\[\displaystyle A_{k}=PD_{k}Q=\sum_{i=1}^{r} \vec{a_{i}}b_{ik} {\vec{c_{i}}}^{T}\Longrightarrow T=\sum_{i=1}^{r} \vec{a_{i}}\otimes \vec{b_{i}}\otimes \vec{c_{i}}.\]
Hence $\rank(T)\leq r$, completing the proof.
\end{proof}

\begin{lemma}\label{dia2}
Let $T=(A_{1};A_{2};\ldots;A_{p})$ be an $n\times p\times n$ tensor, where $A_{1}$ is nonsingular. Then $\rank(T)=n$ if and only if \{$A_{j}A_{1}^{-1}\mid j= 2,3,\ldots,p$\} can be simultaneously diagonalized.
\end{lemma}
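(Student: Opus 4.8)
The plan is to reduce everything to Lemma \ref{dia1}, which already characterizes $\rank(T)\le r$ through a factorization $A_k=PD_kQ$ with diagonal $D_k$. Since we are in the square case $m=n$ and want the exact value $\rank(T)=n$, I would prove the two implications separately, using the nonsingularity of $A_1$ crucially in each.

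For the forward direction I assume $\rank(T)=n$ and apply Lemma \ref{dia1} with $r=n$ to get $n\times n$ matrices $P,Q$ and diagonal $D_1,\ldots,D_p$ with $A_k=PD_kQ$. First I observe that $P$, $Q$, and $D_1$ are all nonsingular: because $A_1=PD_1Q$ has (quaternionic) rank $n$ and rank cannot increase under multiplication, each of the three square factors must have rank $n$; in particular $D_1^{-1}$ exists and is again diagonal. Then the direct computation
\[
A_jA_1^{-1}=(PD_jQ)(PD_1Q)^{-1}=PD_j(QQ^{-1})D_1^{-1}P^{-1}=P\,(D_jD_1^{-1})\,P^{-1}
\]
shows that the single matrix $P$ conjugates every $A_jA_1^{-1}$ to the diagonal matrix $D_jD_1^{-1}$, so the family is simultaneously diagonalizable.

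For the converse I suppose a nonsingular $P$ satisfies $P^{-1}(A_jA_1^{-1})P=\Lambda_j$ with each $\Lambda_j$ diagonal for $j=2,\ldots,p$. Setting $Q=P^{-1}A_1$, $D_1=I_n$, and $D_j=\Lambda_j$ gives $A_1=PI_nQ$ and $A_j=P\Lambda_jQ$, so Lemma \ref{dia1} yields $\rank(T)\le n$. To upgrade this to equality I would use that the rank of a tensor is bounded below by the rank of any frontal slice: writing a minimal decomposition $T=\sum_i \vec{a}_i\otimes \vec{b}_i\otimes \vec{c}_i$ expresses each slice $A_k=\sum_i \vec{a}_i b_{ik} \vec{c}_i^{\,T}$ as a sum of $\rank(T)$ many rank-$\le 1$ quaternionic matrices, whence $\rank(A_1)\le \rank(T)$; since $A_1$ is nonsingular, $n=\rank(A_1)\le \rank(T)\le n$.

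The points requiring care are purely about working over $\mathbb{H}$ rather than a field, and I do not expect a serious obstacle. The two facts I rely on both hold over $\mathbb{H}$: that a product of diagonal matrices is again diagonal (this is entrywise, hence unaffected by noncommutativity) and that quaternionic matrix rank is subadditive and submultiplicative, with each outer product $\vec{a}\,b\,\vec{c}^{\,T}$ having rank at most $1$. The one place I would not treat as routine is the bracketing of $A_jA_1^{-1}=P(D_jD_1^{-1})P^{-1}$: it is arranged so that no diagonal factor is ever commuted past $P$, which is exactly the move that would fail noncommutatively, so I would double-check that step explicitly rather than rely on field intuition.
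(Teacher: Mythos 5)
Your proposal is correct and follows essentially the same route as the paper: both directions reduce to Lemma \ref{dia1}, and your forward direction (invertibility of $P$, $Q$, $D_1$ from nonsingularity of $A_1$, followed by the computation $A_jA_1^{-1}=P\,D_jD_1^{-1}\,P^{-1}$) is identical to the paper's. The only difference is in the converse, where the paper conjugates the tensor to $T'=P^{-1}TA_1^{-1}P=(I_n;D_2;\ldots;D_p)$ and invokes rank-preservation under invertible multiplication, while you feed the factorization $A_k=PD_kQ$ into Lemma \ref{dia1} directly and make explicit the slice-rank lower bound $n=\rank(A_1)\leq\rank(T)$ that the paper dismisses as ``easy to check''.
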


\begin{proof}

The following proof is similar to that found in \cite[Proposition 2.5]{SMS2}. First suppose that $\rank(T)=n$. Then by Lemma \ref{dia1}, there exists an $n\times r$ matrix $P$, an $r\times n$ matrix $Q$ and $r\times r$ diagonal matrices $D_{1}, D_{2},\ldots,D_{p}$ such that 
\[A_{1}=PD_{1}Q,\hspace{2mm} A_{2}=PD_{2}Q,\hspace{2mm}  \ldots, \hspace{2mm} A_{p}=PD_{p}Q.\]
Since $A_{1}$ is non-singular, $D_{1}$ must have rank $n$ and thus $r=n$. This implies that $P$, $Q$ and $D_{1}$ are each invertible. Therefore 
\[A_{j}A_{1}^{-1}=PD_{j}Q(PD_{1}Q)^{-1}=PD_{j}QQ^{-1}D_{1}^{-1}P^{-1}=PD_{j}D_{1}^{-1}P^{-1}\]
for $j=1,\ldots n$, and thus the $A_jA_1^{-1}$ can be simultaneously diagonalized.

For the other direction, suppose that there exists an $n\times n$ matrix $P$ where 
\[D_j=P^{-1}A_{j}A_{1}^{-1}P, \hspace{2mm} j=2,3,\ldots,p\]
are diagonal matrices. Consider the tensor
\[T^{\prime}=P^{-1}TA_{1}^{-1}P=(I_{n};D_{2};\ldots;D_{p}).\]
It is easy to check that  $\rank(T^{\prime})= n$, and since multiplication by invertible matrices is rank preserving, $\rank(T)=n$ as required. 
\end{proof}

\section{The $2\times 2\times 2$ case}

We will show in this section that $2\times 2\times 2$ quaternion tensors have a rank no greater than 3. A priori, we could guarantee a trivial bound of 4, so it would be useful to start with a motivating argument on why an attempt to reduce this bound to 3 is justified. Considering the tensor rank problem from the algebro-geometric perspective, it is natural to try and compute the \textbf{generic rank}, which is the minimum $r$ such that the set of all tensors of rank at most $r$ is a Zariski dense set in $\mathbb{H}^{2}\otimes_{\mathbb{H}}\mathbb{H}^{2}\otimes_{\mathbb{H}}\mathbb{H}^{2}$.

In the complex case, one can show that the generic rank is 2 using an argument from \cite[Proposition 12.4.3.2]{Landsberg}. We can view $T\in A\otimes B\otimes C = \mathbb{C}^{2}\otimes_{\mathbb{C}}\mathbb{C}^{2}\otimes_{\mathbb{C}}\mathbb{C}^{2}$ as a map from $A^*\rightarrow B\otimes C$. Then $\rank(T)$ is the number of rank one matrices needed to span $T(A^*)\subset B\otimes C$ as a vector space (see \cite[Theorem 3.1.1.1]{Landsberg}). We can projectivize to get $\mathbb{P}T(A^*)\subset \mathbb{P}(B\otimes C)$ and consider the Segre embedding

\[ \sigma: \mathbb{CP}^1\times\mathbb{CP}^1\longrightarrow \mathbb{CP}^3\]
\[[a,b]\times[c,d]\mapsto [ac,ad,bc,bd]=[w,x,y,z]. \]

\hspace{3mm}

The space of simple tensors is isomorphic to $\sigma( \mathbb{CP}^1\times\mathbb{CP}^1),$ which is a projective subvariety of $\mathbb{CP}^3 = \mathbb{P}(B\otimes C)$. It is not difficult to show that $wz = xy$ generates the ideal of the image of $\sigma$. Notice that $\mathbb{P}T(A^*)$ is generally a degree 1 hypersurface in $\mathbb{P}(B\otimes C)$ since $\mathbb{P}T(A^*)$ is a linear subspace. It will intersect $\sigma( \mathbb{CP}^1\times\mathbb{CP}^1)$ at deg($\sigma( \mathbb{CP}^1\times\mathbb{CP}^1))=2$ many points by B\'{e}zout's theorem. Since these points correspond to simple tensors in the preimage, we can write $T$ as the sum of 3 simple tensors. This shows that the generic rank in the complex case is 2.

If we try and apply the same argument in the quaternion case, we immediately run into problems. We could  assume that $w,x,y$ and $z$ are not commutative so that the equation that they satisfy is $wy^{-1} = xz^{-1}$ which can be simplified to $wy^{-1}z = x$. If we restrict to the Zariski open set where $y$ is invertible, we are left with a degree 3 polynomial. This would suggest that the generic rank in the noncommutative case is 3. However, making this formal proves to be more difficult than the complex case. 

The Segre map in general is defined on vector spaces by $\mathbb{P}(V)\times \mathbb{P}(W)\longrightarrow \mathbb{P}(V\otimes W)$. We could extend this idea to quaternionic vector spaces so that $V\times W = \mathbb{HP}^m\times \mathbb{HP}^n $, except that $V\otimes W$ would no longer be quaternionic vector space (see \cite[Section 1.3.3]{Widdows}), so $\mathbb{P}(V\otimes_{\mathbb{H}} W)$ is ill-defined. However, we could use the tensor product over the complex numbers, and replace $\mathbb{P}(V\otimes_{\mathbb{H}} W)$ with the Grassmannian $\text{Gr}_2(V\otimes_{\mathbb{C}} W)$. All of this taken together suggests that the generic rank over $\mathbb{H}$ is 3.

\begin{conjecture}
The generic rank of a tensor in $\mathbb{H}^{2}\otimes_{\mathbb{H}}\mathbb{H}^{2}\otimes_{\mathbb{H}}\mathbb{H}^{2}$ is 3.
\end{conjecture}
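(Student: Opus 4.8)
The plan is to translate the conjecture into a statement about the eigenvalues of a single $2\times 2$ quaternion matrix, using the reduction lemmas already in hand, and then to show that the rank-$2$ locus fails to be dense. First I would fix a workable model for the genericity notion: identify $\mathbb{H}^{2}\otimes_{\mathbb{H}}\mathbb{H}^{2}\otimes_{\mathbb{H}}\mathbb{H}^{2}$ with the real affine space $\mathbb{R}^{32}$ (four real coordinates per quaternion entry, eight entries), and declare a subset generic if it contains a nonempty real-Zariski-open set. With this convention the condition that the first frontal slice $A_{1}$ be nonsingular is Zariski-open and dense, so it suffices to analyse tensors $T=(A_{1};A_{2})$ with $A_{1}$ invertible.

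On this open locus, Lemma \ref{dia2} reduces the rank question to the diagonalizability of $M=A_{2}A_{1}^{-1}$. The essential point, and the place where the real action on the frontal slices enters, is that a rank-$2$ decomposition of $T$ forces the middle (frontal) vectors to be real: writing $T=\vec{a}_{1}\otimes\vec{b}_{1}\otimes\vec{c}_{1}+\vec{a}_{2}\otimes\vec{b}_{2}\otimes\vec{c}_{2}$ with $\vec{b}_{i}\in\mathbb{R}^{2}$ and running the argument of Lemma \ref{dia2}, one obtains $M=P\,\diag(r_{1},r_{2})\,P^{-1}$ with $r_{1},r_{2}\in\mathbb{R}$. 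Thus, on the invertible locus, $\rank(T)=2$ holds if and only if $M$ is $\mathbb{H}$-diagonalizable with \emph{real} eigenvalues, equivalently, by Lemma \ref{adjoint}, if and only if the complex adjoint $\chi_{M}$ is diagonalizable and all of its eigenvalues are real.

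The next step is to show that this fails generically. The characteristic polynomial of $\chi_{M}$ is a real quartic whose roots occur in conjugate pairs $\{\lambda_{1},\overline{\lambda_{1}},\lambda_{2},\overline{\lambda_{2}}\}$, and having all four roots real is a closed, positive-codimension condition on $\mathbb{R}^{32}$. Concretely, taking $A_{1}=I$ and $A_{2}=\diag(i,2i)$ gives $M=\diag(i,2i)$ with adjoint eigenvalues $\pm i,\pm 2i$, all nonreal; since ``$\chi_{M}$ has no real eigenvalue'' is an open condition, its preimage under the rational map $(A_{1},A_{2})\mapsto M$ is a nonempty Zariski-open set of tensors on which $\rank(T)>2$. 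Combined with the theorem that every such tensor satisfies $\rank(T)\leq 3$, this exhibits a dense open set of rank-exactly-$3$ tensors, so the rank-$\leq 2$ locus is not dense and the generic rank equals $3$.

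The main obstacle is foundational rather than computational. The clean eigenvalue dichotomy above depends on committing to the interpretation in which the frontal factor is governed by the real action $M_{2}(\mathbb{R})$, so that rank-$2$ decompositions are forced to use real middle vectors; under the unrestricted quaternionic definition of a simple tensor, $M$ is $\mathbb{H}$-diagonalizable off a codimension-one set and the same analysis would instead yield generic rank $2$. Making the conjecture precise therefore requires pinning down the correct notion of rank for the $(M_{2}(\mathbb{H}),M_{2}(\mathbb{H}))$-bimodule with real frontal action, and, since $\mathbb{H}$ is neither commutative nor algebraically closed, supplying a genuinely well-defined genericity framework (the authors already note that $\mathbb{P}(V\otimes_{\mathbb{H}}W)$ is ill-defined). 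I expect this reconciliation of the module structure with the density argument to be the crux; once it is in place, the eigenvalue computation sketched above should close the proof.
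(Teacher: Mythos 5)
First, a contextual point: the statement you set out to prove is a \emph{conjecture} in the paper. The authors do not prove it; the only support they offer is an informal degree count (the noncommutative Segre-type relation $wy^{-1}z=x$ has degree $3$, suggesting a B\'ezout-style intersection number of $3$) plus the remark that a rigorous formulation might replace the ill-defined $\mathbb{P}(V\otimes_{\mathbb{H}}W)$ by $\mathrm{Gr}_2(V\otimes_{\mathbb{C}}W)$. So there is no proof in the paper to compare yours against, and your eigenvalue-theoretic approach via Lemmas \ref{dia2} and \ref{adjoint} must be judged on its own terms.

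Judged that way, it has a genuine gap --- one you half-concede in your closing paragraph. The pivotal claim, that a rank-$2$ decomposition forces the frontal vectors to be real, is false under the paper's definitions. The definition of a simple tensor places no reality constraint on any factor; the proof of Lemma \ref{dia1} explicitly uses $D_k=\diag(b_{1k},\ldots,b_{rk})$ with $b_{ik}\in\mathbb{H}$; and the paper's own decompositions use quaternionic frontal vectors (in Proposition \ref{222decomp}, $T_3$ is the simple tensor $(0,1)\otimes\bigl(A_{22}-A_{21}A_{11}^{-1}A_{12},\,B_{22}-B_{21}B_{11}^{-1}B_{12}\bigr)\otimes(0,1)$). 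The $M_{n_2}(\mathbb{R})$ action on frontal slices only restricts which slice \emph{operations} preserve rank; it does not redefine rank. With the actual definition, Lemma \ref{dia2} characterizes rank $2$ (for $A_1$ invertible) by $\mathbb{H}$-diagonalizability of $M=A_2A_1^{-1}$ with arbitrary quaternionic eigenvalues, and then your own correct observation inverts your conclusion: off the vanishing locus of the discriminant of $\chi_M$ --- a nonempty Zariski-open, hence dense, subset of $\mathbb{R}^{32}$ containing e.g. $A_1=I$, $A_2=\diag(i,2i)$ --- the matrix $\chi_M$ has four distinct eigenvalues, so $M$ is $\mathbb{H}$-diagonalizable and $\rank(T)=2$. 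Run with the paper's definitions, your argument therefore yields generic rank $2$, i.e. it would refute the conjecture rather than prove it; run with your ``real frontal vectors'' definition, it addresses a statement different from the one the paper makes. Deferring this choice to a later ``foundational'' step is not a loose end that can be tied up afterwards; it is the entire content of the problem.

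There is a secondary gap even inside your preferred reading: the closing appeal to Theorem \ref{222quaternion} for the upper bound $\rank(T)\leq 3$ is not available there, because the decomposition it rests on (Proposition \ref{222decomp}) uses the quaternionic frontal vector above and hence does not exhibit rank $\leq 3$ in the real-frontal-vector category; splitting $T_3$ into two real-frontal simple tensors only gives $\leq 4$. So even granting your reading, you obtain generic rank $\geq 3$, not $=3$, without a new upper-bound argument. What your analysis genuinely accomplishes --- and it is worth stating --- is to expose that the conjecture, read literally against the paper's definition of rank and Lemma \ref{dia2}, is in direct tension with the paper's own results; that is a valuable observation, but it is the opposite of a proof.
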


With this result in hand, it makes sense to try and prove that any $2\times 2\times 2$ quaternion tensor has rank at most 3. The authors first proved this bound by writing out a system of polynomial equations that a tensor having rank 3 must satisfy. Using some basic results from algebraic geometry, we were able to prove the existence of a solution. This allowed us to arrive at the explicit and more convenient decomposition found below. 

\begin{proposition}\label{222decomp}
Let \[
   T=\left(
  \begin{bmatrix}
    A_{11} & A_{12}  \\
    A_{21} & A_{22}  \\
  \end{bmatrix};
  \begin{bmatrix}
    B_{11} & B_{12}  \\
    B_{21} & B_{22}  \\
  \end{bmatrix}\right) 
\] be a $2\times 2\times 2$ quaternion tensor. If $A_{11}B_{11}\neq 0$, then $T=T_{1}+T_{2}+T_{3}$ where:

\begin{align*}
& T_{1}=\left(
  \begin{bmatrix}
    A_{11} & A_{11}(A_{11}^{-1}A_{12}) \\
    A_{21} & A_{21}(A_{11}^{-1}A_{12}) \\
  \end{bmatrix};
  \begin{bmatrix}
    0 & 0 \\
    0 & 0 \\
  \end{bmatrix}\right),\\
& T_{2}=\left(
  \begin{bmatrix}
    0 & 0  \\
   0 & 0 \\
  \end{bmatrix};
  \begin{bmatrix}
B_{11} & B_{11}(B_{11}^{-1}B_{12})\\
B_{21} & B_{21}(B_{11}^{-1}B_{12}) \\
  \end{bmatrix}\right),\\
&   T_{3}=\left(
  \begin{bmatrix}
    0 & 0  \\
   0 & A_{22}-A_{21}(A_{11}^{-1}A_{12})  \\
  \end{bmatrix};
  \begin{bmatrix}
    0 & 0  \\
   0 & B_{22}-B_{21}(B_{11}^{-1}B_{12})  \\
  \end{bmatrix}\right).\\
\end{align*}
\end{proposition}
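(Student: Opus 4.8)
The plan is to verify two things separately: first that the three tensors $T_1,T_2,T_3$ genuinely add up to $T$, and second that each $T_\ell$ is a simple tensor in the sense of the definition above. Note at the outset that the hypothesis $A_{11}B_{11}\neq 0$ forces both $A_{11}\neq 0$ and $B_{11}\neq 0$; since $\mathbb{H}$ is a division ring, both are invertible, so the expressions $A_{11}^{-1}A_{12}$ and $B_{11}^{-1}B_{12}$ appearing throughout the decomposition are well defined.

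For the additivity, I would simply add the two frontal slices of $T_1+T_2+T_3$ entrywise. In the first (the $A$-)slice the first-column entries are immediate; the only entries requiring care are position $(1,2)$, where associativity gives $A_{11}(A_{11}^{-1}A_{12}) = (A_{11}A_{11}^{-1})A_{12} = A_{12}$, and position $(2,2)$, where the contribution $A_{21}(A_{11}^{-1}A_{12})$ coming from $T_1$ telescopes against the $-A_{21}(A_{11}^{-1}A_{12})$ sitting inside $T_3$ to leave exactly $A_{22}$. The second (the $B$-)slice is verified identically with $A$ replaced by $B$. This recovers the two slices of $T$.

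The heart of the argument, and the place where noncommutativity must be tracked, is exhibiting each $T_\ell$ as a single outer product $\vec a\otimes\vec b\otimes\vec c$ with $T_{i_1 i_2 i_3}=a_{i_1}b_{i_2}c_{i_3}$ in that left-to-right order. I would take $T_1=\vec a\otimes\vec b\otimes\vec c$ with $\vec a=(A_{11},A_{21})$, $\vec b=(1,0)$ and $\vec c=(1,A_{11}^{-1}A_{12})$, so that each entry $a_{i_1}\cdot 1\cdot c_{i_3}$ reproduces the displayed slice precisely because the common left factor $A_{i_1 1}$ stands on the left and the common right factor $A_{11}^{-1}A_{12}$ on the right, while the second slice vanishes since $b_2=0$. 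Symmetrically $T_2$ is simple with $\vec a=(B_{11},B_{21})$, $\vec b=(0,1)$, $\vec c=(1,B_{11}^{-1}B_{12})$. For $T_3$, whose only nonzero entries sit in position $(2,2)$ of each slice, I would take $\vec a=(0,1)$, $\vec c=(0,1)$ and $\vec b=(A_{22}-A_{21}(A_{11}^{-1}A_{12}),\,B_{22}-B_{21}(B_{11}^{-1}B_{12}))$, so that $a_2 b_{i_2} c_2=b_{i_2}$ fills in the two Schur-complement entries while the choices $a_1=c_1=0$ annihilate every other position.

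The main obstacle is entirely bookkeeping around noncommutativity: one must confirm that the specific parenthesization chosen in the statement is exactly what lets each nonzero slice factor as a left vector times a right vector, since over $\mathbb{H}$ the middle scalar $b_{i_2}$ cannot be pulled out of the product $a_{i_1}b_{i_2}c_{i_3}$. Finally I would record the degenerate case: if both Schur complements $A_{22}-A_{21}(A_{11}^{-1}A_{12})$ and $B_{22}-B_{21}(B_{11}^{-1}B_{12})$ vanish, then $T_3=0$ (which is not a simple tensor) and $T=T_1+T_2$, giving $\rank(T)\le 2$; otherwise every nonzero summand is a genuine simple tensor and $\rank(T)\le 3$.
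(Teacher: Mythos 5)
Your proposal is correct, and it follows the only route there is here: the paper states this proposition without a written proof, treating it as a direct verification, which is exactly what you carry out (the telescoping of the $A_{21}(A_{11}^{-1}A_{12})$ terms plus the explicit outer-product vectors $\vec a\otimes\vec b\otimes\vec c$ witnessing that each $T_\ell$ is simple, with the real entries $0,1$ in the middle slot avoiding any noncommutativity issue). Your closing remark on the degenerate case $T_3=0$ is a sensible refinement consistent with how the paper later uses the proposition, namely that each $T_i$ has rank at most $1$, so $\rank(T)\leq 3$.
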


One could ask whether this tensor decomposition is unique up to rescaling or permutation indeterminacy (see \cite{RSG} for details about uniqueness of tensor decompositions). While it is not difficult to find examples where a decomposition does not appear to be unique, we will not consider uniqueness questions for the purposes of this article. We can however bound the rank of any quaternion tensor $T$ of size $2\times 2\times 2$ using the explicit decomposition into at most 3 simple tensors from the proposition. To ensure that we can apply this result, we first need to ensure that we can write $T$ in the desired form, and this requires us to use row and column operations.

\begin{theorem}\label{222quaternion}
Let $T$ be a $2\times 2\times 2$ quaternion tensor. Then $\rank(T)\leq 3$.
\end{theorem}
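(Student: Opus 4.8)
The plan is to reduce an arbitrary nonzero tensor $T=(A;B)$, with frontal slices $A=(A_{ij})$ and $B=(B_{ij})$, to one satisfying the hypothesis $A_{11}B_{11}\neq 0$ of Proposition \ref{222decomp}, using only invertible rank-preserving operations, and then to read off the bound directly from that proposition. Since rank is only defined for nonzero tensors, at least one frontal slice is nonzero; after possibly swapping the two slices (a real frontal slice operation, hence invertible and rank-preserving by Lemma \ref{multilinear} applied with a real matrix on the middle factor), I may assume $A\neq 0$.

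First I would arrange $A_{11}\neq 0$. Choosing any nonzero entry $A_{ij}$, I left-multiply by the permutation matrix swapping rows $1$ and $i$ (a row operation, which preserves rank by the remark following Lemma \ref{multilinear}) and right-multiply by the permutation matrix swapping columns $1$ and $j$; both are nonsingular, so this is an invertible rank-preserving operation moving the chosen entry into the $(1,1)$ slot. Note that these permutations act on $B$ as well, but at this stage I only need $A_{11}\neq 0$.

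Next I would arrange $B_{11}\neq 0$ without disturbing $A$. I apply the real frontal slice operation $(A;B)\mapsto(A;B+tA)$ with $t\in\mathbb{R}$, which corresponds to multiplying the middle factor by $\left[\begin{smallmatrix}1&0\\t&1\end{smallmatrix}\right]\in GL_2(\mathbb{R})$ and is therefore invertible and rank-preserving. The new $(1,1)$ entry of the second slice is $B_{11}+tA_{11}$; since $A_{11}\neq 0$, the equation $B_{11}+tA_{11}=0$ forces $t=-B_{11}A_{11}^{-1}$, which has a real solution for at most one value of $t$, so any other choice gives $B_{11}+tA_{11}\neq 0$ while leaving $A$ untouched. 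Writing the resulting tensor as $T'=(A;B')$, I then have $A_{11}\neq 0$ and $B'_{11}\neq 0$, so $A_{11}B'_{11}\neq 0$ because $\mathbb{H}$ is a division ring. Proposition \ref{222decomp} now produces an explicit decomposition of $T'$ into three simple tensors, whence $\rank(T')\leq 3$; since every step was an invertible rank-preserving operation, $\rank(T)=\rank(T')\leq 3$.

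I expect the only real content to be bookkeeping rather than any deep obstacle, once Proposition \ref{222decomp} is granted: the delicate points are verifying that each reduction step is genuinely one of the allowed rank-preserving operations in the noncommutative setting (in particular keeping row operations as left-multiplication and column operations as right-multiplication), and justifying that the single real parameter $t$ always suffices, i.e. that $B_{11}+tA_{11}=0$ has at most one real solution. Everything else is the direct invocation of the explicit decomposition.
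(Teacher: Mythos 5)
Your proof is correct and follows essentially the same route as the paper: reduce $T$ by real rank-preserving slice operations (justified by Lemma \ref{multilinear}) to the hypothesis $A_{11}B_{11}\neq 0$ and then invoke Proposition \ref{222decomp}. In fact your version is slightly more careful than the paper's, which dismisses the degenerate configurations as ``trivial cases,'' whereas your frontal slice operation $B\mapsto B+tA$ with a suitable real $t$ handles every nonzero tensor uniformly.
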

\begin{proof}
Let us write the tensor as  \[
   T=\left(
  \begin{bmatrix}
    A_{11} & A_{12}  \\
    A_{21} & A_{22}  \\
  \end{bmatrix};
  \begin{bmatrix}
    B_{11} & B_{12}  \\
    B_{21} & B_{22}  \\
  \end{bmatrix}\right) .
\] Since $\mathbb{R}$ is the center of $\mathbb{H}$, by Lemma \ref{multilinear} we can perform real elementary operations on $T$ (including frontal slice, horizontal slice and lateral slice operations). Except for the trivial cases with too many zero entries (in which case the tensor rank is obvious), we may assume that $A_{11}$ and $B_{11}$ are not zero. Then $\rank(T)\leq 3$ follows from Proposition \ref{222decomp} since the rank of each $T_i$ is no more than 1.
\end{proof}

It is not difficult to show that the tensor

\[
   T=\left(
  \begin{bmatrix}
1 & 0  \\
0 & 1  \\
  \end{bmatrix};
      \begin{bmatrix}
0 & 1  \\
0 & 0  \\
 \end{bmatrix}\right)
     \]

\vspace{3mm}

\noindent has rank 3. In fact, it follows immediately from Lemma \ref{dia2}. Therefore, the bound on the rank for $2\times 2\times 2$ quaternion tensors is the best possible.

\section{The $2\times 2\times 3$ and $2\times 3\times 2$ cases}

When working over a commutative ring, there is no difference between the $2\times 2\times 3$, $2\times 3\times 2$, and $3\times 2\times 2$ tensor cases. Over the quaternions however, the $2\times 3\times 2$ tensor differs from the other two cases. 

We will start our discussion by working over the complex numbers. While it is known that the rank of a complex tensor of this size is bounded by 3, as far as the authors are aware, an explicit decomposition like the one below has not been made readily available. 

\begin{theorem}\label{232complex}
Let $T$ be a complex $2\times 3\times 2$ tensor

 \[
   T=\left(
  \begin{bmatrix}
    A_{1} & A_{2} \\
   A_{3} & A_{4} \\
  \end{bmatrix};
  \begin{bmatrix}
    B_{1} & B_{2} \\
   B_{3} & B_{4} \\
     \end{bmatrix};
     \begin{bmatrix}
    C_{1} & C_{2} \\
   C_{3} & C_{4} \\
  \end{bmatrix}\right)
\]

\hspace{5mm}

\noindent such that the two matrices $M$ and $N$ are invertible:

\[
M=\begin{bmatrix}
    A_2 & A_3 & A_4\\
   B_2& B_3 & B_4 \\
   C_2& C_3& C_4\\
  \end{bmatrix}
  \text{,\hspace{2mm} }
N=\begin{bmatrix}
    A_1 & A_2 & A_4\\
   B_1& B_2 & B_4 \\
   C_1& C_2& C_4\\
  \end{bmatrix}. 
\]

\hspace{5mm}

\noindent Then $T$ has a decomposition as the sum of the 3 simple tensors defined below.
\end{theorem}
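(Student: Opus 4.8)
The plan is to verify the displayed three-term decomposition directly, using Lemma \ref{dia1} to organize the bookkeeping. Writing $S_A = \begin{bmatrix} A_1 & A_2 \\ A_3 & A_4 \end{bmatrix}$, $S_B = \begin{bmatrix} B_1 & B_2 \\ B_3 & B_4 \end{bmatrix}$, $S_C = \begin{bmatrix} C_1 & C_2 \\ C_3 & C_4 \end{bmatrix}$ for the three frontal slices, Lemma \ref{dia1} (with $m=n=2$, $p=3$, $r=3$) says that exhibiting a rank-$3$ decomposition is the same as producing a $2\times 3$ matrix $P$, a $3\times 2$ matrix $Q$, and diagonal matrices $D_1,D_2,D_3$ with $S_A = PD_1Q$, $S_B = PD_2Q$, $S_C = PD_3Q$. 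Equivalently, letting $\vec a_i$ be the columns of $P$ and $\vec c_i^{T}$ the rows of $Q$, we must write each frontal slice as a common linear combination $S_A = \sum_{i=1}^3 b_{i1}\,\vec a_i\vec c_i^{T}$, $S_B = \sum_i b_{i2}\,\vec a_i\vec c_i^{T}$, $S_C = \sum_i b_{i3}\,\vec a_i\vec c_i^{T}$ of the \emph{same} three rank-one matrices $\vec a_i\vec c_i^{T}$. The three displayed simple tensors encode exactly such a choice of $\vec a_i$, $\vec c_i$, and scalars $b_{ik}$, so the verification reduces to checking these three matrix identities entry by entry.

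To explain where the decomposition comes from, and to see where the hypotheses enter, I would argue geometrically. The slices span a subspace $V = \operatorname{span}(S_A,S_B,S_C)\subseteq M_2(\mathbb{C})\cong\mathbb{C}^4$, and a rank-$3$ decomposition amounts to choosing a basis of $V$ consisting of rank-one matrices. The rank-$\le 1$ locus is the determinantal quadric in $\mathbb{P}(M_2)=\mathbb{P}^3$, and $\mathbb{P}(V)$ is a plane $\mathbb{P}^2$ meeting this quadric in a conic; the points of that conic are the available rank-one directions, and three in general position form the desired basis. The role of $M$ and $N$ is precisely to guarantee we are in this good situation: both being invertible forces $\dim V = 3$ (otherwise the rows of the mode-$2$ flattening, and hence $M$ and $N$, would be dependent) and forces $V$ to be transverse to the coordinate directions $E_{11}$ and $E_{21}$, since a nonzero combination of the slices equal to a multiple of $E_{11}$ is a left null vector of $M$, and one equal to a multiple of $E_{21}$ is a left null vector of $N$. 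This transversality keeps the conic nondegenerate and lets us solve explicitly for the rank-one factors and the coefficients $b_{ik}$, whose formulas involve the inverses made legitimate by $\det M, \det N \neq 0$.

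The main obstacle is not the algebra of the final verification, which is routine, but establishing that the three rank-one matrices produced are genuinely linearly independent and actually span $V$ — equivalently, that the conic $\mathbb{P}(V)\cap\{\det = 0\}$ is nondegenerate and not contained in a line, so that three of its points can serve as a basis. This nondegeneracy is exactly what the invertibility of $M$ and $N$ controls, and confirming that the explicit factors realize it, rather than collapsing into a rank-deficient configuration, is the crux. Once this is in place, substituting the displayed $\vec a_i$, $\vec c_i$, $b_{ik}$ into $\sum_i b_{ik}\,\vec a_i\vec c_i^{T}$ and matching against $S_A, S_B, S_C$ completes the argument, with the $M, N$ conditions guaranteeing that all three terms are genuine nonzero simple tensors.
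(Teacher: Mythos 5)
Your proposal has a genuine gap: it never constructs the decomposition. The theorem's content is precisely the exhibition of three explicit simple tensors summing to $T$ (the statement says ``defined below''), so a blind proof must produce them; your text instead refers throughout to ``the three displayed simple tensors,'' ``the explicit factors,'' and ``the displayed $\vec a_i$, $\vec c_i$, $b_{ik}$,'' yet no formulas are ever given. Consequently the verification described in your first paragraph has nothing to verify, and the substitution described in your last paragraph has nothing to substitute. The reduction via Lemma \ref{dia1} is correct (rank $\leq 3$ is equivalent to expressing all three slices as linear combinations of a common triple of rank-one matrices), but what remains after that reduction is exactly the theorem, and it is left undone. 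For comparison, the paper's proof is entirely constructive: it introduces four $3\times 3$ determinant-type scalars $S_1=\det(M)$, $S_2=\det(N)$, $T_1$, $T_2$, checks by inspection the three cofactor identities $X_2T_1 - X_1S_1 + X_4T_2 = X_3S_2$ for $X\in\{A,B,C\}$, and then writes $T$ as a sum of three explicit rank-one tensors whose entries involve $S_1^{-1}$ and $S_2^{-1}$; the hypotheses $\det M\neq 0$ and $\det N\neq 0$ enter only to license those divisions.

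Moreover, the geometric mechanism you attribute to the hypotheses is incorrect, so the sketch cannot be completed as stated. It is true that invertibility of $M$ (or $N$) forces $\dim V=3$, and that, given $\dim V=3$, singularity of $M$ (resp.\ $N$) is equivalent to $E_{11}\in V$ (resp.\ $E_{21}\in V$). But these conditions do not make the conic $\mathbb{P}(V)\cap\{\det=0\}$ nondegenerate. For example, the slices
\[
S_A=\begin{bmatrix}1&1\\0&0\end{bmatrix},\qquad
S_B=\begin{bmatrix}0&0\\1&1\end{bmatrix},\qquad
S_C=\begin{bmatrix}1&0\\0&-1\end{bmatrix}
\]
give both $M$ and $N$ invertible ($\det M=-1$, $\det N=1$), yet $V=\{x_1-x_2-x_3+x_4=0\}$ is the tangent plane to the determinantal quadric at the all-ones rank-one matrix, so the conic degenerates into two lines (the two rulings through that point). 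A correct version of your argument would instead note that a plane section of a rank-$4$ quadric always has rank at least $2$, hence is never a double line, and therefore always contains three non-collinear points; but that argument is not what you wrote, it shows that the hypotheses on $M$ and $N$ are not what controls this step, and in any case it yields only the existence of some rank-$3$ decomposition rather than the explicit one the theorem asserts.
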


\begin{proof}
Define $S_1,S_2,T_1$ and $T_2$ by

\[S_{1}=A_{2}B_{3}C_{4}-A_{2}B_{4}C_{3}-A_{3}B_{2}C_{4}+A_{3}B_{4}C_{2}+A_{4}B_{2}C_{3}-A_{4}B_{3}C_{2},\]
\[S_{2}=A_{1}B_{2}C_{4}-A_{1}B_{4}C_{2}-A_{2}B_{1}C_{4}+A_{2}B_{4}C_{1}+A_{4}B_{1}C_{2}-A_{4}B_{2}C_{1},\]
\[T_{1}=A_{1}B_{3}C_{4}-A_{1}B_{4}C_{3}-A_{3}B_{1}C_{4}+A_{3}B_{4}C_{1}+A_{4}B_{1}C_{3}-A_{4}B_{3}C_{1},\]
\[T_{2}=A_{1}B_{2}C_{3}-A_{1}B_{3}C_{2}-A_{2}B_{1}C_{3}+A_{2}B_{3}C_{1}+A_{3}B_{1}C_{2}-A_{3}B_{2}C_{1}.\]

\hspace{5mm}

\noindent We can check by inspection that 

\[A_{2}T_{1}-A_{1}S_{1}+A_{4}T_{2}=A_{3}S_{2},\]
\[B_{2}T_{1}-B_{1}S_{1}+B_{4}T_{2}=B_{3}S_{2},\]
\[C_{2}T_{1}-C_{1}S_{1}+C_{4}T_{2}=C_{3}S_{2}.\]

Since $S_{1}=\det(M)\neq 0$ and $S_{2}=\det(N)\neq 0$, we can write $T=T_{1}+T_{2}+T_{3}$ where:

\begin{align*}
& T_{1}=\left(
  \begin{bmatrix}
    A_{2}T_{1}S_{1}^{-1} & A_{2}  \\
   0 & 0  \\
  \end{bmatrix};
\begin{bmatrix}
    B_{2}T_{1}S_{1}^{-1} & B_{2}  \\
   0 & 0  \\
  \end{bmatrix};
  \begin{bmatrix}
    C_{2}T_{1}S_{1}^{-1} & C_{2}  \\
   0 & 0  \\
  \end{bmatrix}\right),\\
& T_{2}=\left(
  \begin{bmatrix}
     0 & 0  \\
    A_{4}T_{2}S_{2}^{-1} & A_{4}  \\
  \end{bmatrix};
\begin{bmatrix}
     0 & 0  \\
    B_{4}T_{2}S_{2}^{-1} & B_{4}  \\
  \end{bmatrix};
  \begin{bmatrix}
     0 & 0  \\
    C_{4}T_{2}S_{2}^{-1} & C_{4}  \\
  \end{bmatrix}\right),\\
& T_{3}=\left(
  \begin{bmatrix}
     (A_{1}S_{1}-A_{2}T_{1})S_{1}^{-1} & 0  \\
    -(A_{1}S_{1}-A_{2}T_{1})S_{2}^{-1} & 0 \\
  \end{bmatrix};
\begin{bmatrix}
     (B_{1}S_{1}-B_{2}T_{1})S_{1}^{-1} & 0  \\
    -(B_{1}S_{1}-B_{2}T_{1})S_{2}^{-1} & 0 \\
  \end{bmatrix};
  \begin{bmatrix}
     (C_{1}S_{1}-C_{2}T_{1})S_{1}^{-1} & 0  \\
    -(C_{1}S_{1}-C_{2}T_{1})S_{2}^{-1} & 0  \\
  \end{bmatrix}\right).\\
\end{align*}
\end{proof}

If we try and generalize Theorem \ref{222quaternion} or Theorem \ref{232complex} to write an explicit decomposition for the $2\times 2\times 3$ quaternion case, we immediately run into problems with the operations needed to write $T$ in an appropriate form. In Theorem \ref{222quaternion} for example, we only needed real row and column operations. As mentioned in Section \ref{definitions}, we cannot freely apply row and column operations using quaternions, and can only use row operations where $\mathbb{H}$ is acting on the left, and column operations where $\mathbb{H}$ is acting on the right (that is, horizontal and lateral slice operations). Nonetheless, we can provide a basic bound on the rank even if we cannot write down an explicit decomposition. Let us start with a motivational example.

\begin{example}\normalfont
Consider the quaternion $2\times 2\times 3$ tensor
 \[
   T=\left(
  \begin{bmatrix}
1 & i & 0  \\
0 & -j & 1+i\\
  \end{bmatrix};
  \begin{bmatrix}
  0 & 1+j & 0 \\
 0&  i+k & 1+j\\
     \end{bmatrix}\right).
\]

\noindent We can apply rank-preserving row and column operations to reduce $T$ to a form that is easier to decompose into simple tensors.

\begin{align*}
T&\longrightarrow \left(
  \begin{bmatrix}
1 & k & 0  \\
0 & 1 & 1+i\\
  \end{bmatrix};
  \begin{bmatrix}
  0 & -1+j & 0 \\
 0&  -i+k & 1+j\\
     \end{bmatrix}\right)\text{\hspace{0.5cm}  using $C_2\rightarrow C_2j$}\\
&\longrightarrow \left(
  \begin{bmatrix}
1 & k & 0  \\
0 & 1 & 0\\
  \end{bmatrix};
  \begin{bmatrix}
  0 & -1+j & 1+i-j+k \\
 0&  -i+k & i-k\\
     \end{bmatrix}\right) \text{\hspace{0.5cm}  using $C_3\rightarrow C_3-C_2(1+i)+C_1(j+k)$}\\
&\longrightarrow \left(
  \begin{bmatrix}
1 & 3k & 0  \\
0 & 3 & 0\\
  \end{bmatrix};
  \begin{bmatrix}
  0 & -1-j+2k & 1+i-j+k \\
 0&  2-i+k & i-k\\
     \end{bmatrix}\right)\text{\hspace{0.5cm} using $C_2\rightarrow 3C_2\newline -C_3(-2+i-k)$}\\
& =   \left(\begin{bmatrix}
1 & 0 & 0  \\
0 & 0 & 0\\
  \end{bmatrix};
  \begin{bmatrix}
  0 & 0 & 0 \\
 0& 0 & 0\\
     \end{bmatrix}\right)+
 \left(\begin{bmatrix}
0 & 0 & 0  \\
0 & 0 & 0\\
  \end{bmatrix};
  \begin{bmatrix}
  0 & 0 & (-i-j)(i-k) \\
 0& 0 & i-k\\
     \end{bmatrix}\right)\\
&\hspace{5mm}+
 \left(\begin{bmatrix}
0 & 3k & 0  \\
0 & 3 & 0\\
  \end{bmatrix};
  \begin{bmatrix}
 0 &  k(2-i+k) & 0 \\
 0& 2-i+k & 0\\
     \end{bmatrix}\right)\\
\end{align*}
Therefore, $\rank(T)\leq 3$. \hfill $\Box$
\end{example}

\begin{theorem}\label{223quaternion}
Let $T$ be a $2\times 2\times 3$ or a $3\times 2\times 2$ quaternion tensor. Then $\rank(T)\leq 3$. 
\end{theorem}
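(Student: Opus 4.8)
The plan is to reduce everything to a single clean criterion: a $2\times 2\times 3$ tensor $T=(A;B)$ (with frontal slices $A,B\in\mathbb{H}^{2\times 3}$) has $\rank(T)\le 3$ as soon as each of its three lateral slices $L_k=[\,a^{(k)}\mid b^{(k)}\,]$ (the $2\times 2$ matrix formed by the $k$-th columns $a^{(k)},b^{(k)}$ of $A,B$) has rank at most $1$ over $\mathbb{H}$. Indeed, if $L_k=u_k\rho_k$ with a column $u_k\in\mathbb{H}^2$ and a row $\rho_k\in\mathbb{H}^{1\times 2}$, then $T=\sum_{k=1}^{3}u_k\otimes\rho_k\otimes e_k$ is a sum of $3$ simple tensors. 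I would first dispose of the $3\times 2\times 2$ case by symmetry: entrywise quaternion conjugation combined with reversing the order of the three modes sends $\vec a\otimes\vec b\otimes\vec c$ to $\overline{\vec c}\otimes\overline{\vec b}\otimes\overline{\vec a}$ (since $\overline{a_ib_jc_k}=\bar c_k\bar b_j\bar a_i$), a rank-preserving bijection $\mathbb{H}^{3}\otimes_{\mathbb{H}}\mathbb{H}^{2}\otimes_{\mathbb{H}}\mathbb{H}^{2}\to\mathbb{H}^{2}\otimes_{\mathbb{H}}\mathbb{H}^{2}\otimes_{\mathbb{H}}\mathbb{H}^{3}$, so it suffices to treat $2\times 2\times 3$.

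If $\rank A\le 1$ or $\rank B\le 1$ I am done at once: writing $T=(A;0)+(0;B)$, one has $\rank(A;0)\le\rank_{\mathbb{H}}A$ and $\rank(0;B)\le\rank_{\mathbb{H}}B$, and each $2\times 3$ slice has matrix rank at most $2$, so the two terms contribute at most $1$ and $2$. Thus assume $\rank A=\rank B=2$; then each of $A,B$, viewed as a map $\mathbb{H}^3\to\mathbb{H}^2$, has a $1$-dimensional right kernel, spanned by $v_A$ and $v_B$. The whole task is to find $R\in GL_3(\mathbb{H})$ so that the combined lateral slices $M(c)=[\,Ac\mid Bc\,]$, for $c$ the columns of $R$, are each singular; the rank-preserving lateral operation given by right multiplication by $R$ then puts $T$ into the desired form. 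Note that $M(c)$ is singular precisely when $Ac$ and $Bc$ are right-parallel in $\mathbb{H}^2$, including the cases $Ac=0$ or $Bc=0$.

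I would then split on the kernels. If $v_A,v_B$ are right-dependent, $A$ and $B$ share a kernel vector $v$, and a lateral operation sending $v$ to $e_3$ kills the third columns of both $A$ and $B$; then $T$ is supported on its first two lateral slices and is a $2\times 2\times 2$ tensor, so $\rank(T)\le 3$ by Theorem \ref{222quaternion}. If $v_A,v_B$ are independent, extend to a basis $v_A,v_B,v_0$ of $\mathbb{H}^3$ and take the columns of $R$ to be $v_A,v_B,c$ with $c=v_A\alpha+v_B\beta+v_0$. Then $M(v_A)=[\,0\mid Bv_A\,]$ and $M(v_B)=[\,Av_B\mid 0\,]$ are automatically singular, and since $Av_A=Bv_B=0$ one gets $Ac=(Av_B)\beta+Av_0$ and $Bc=(Bv_A)\alpha+Bv_0$. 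Because $A,B$ have exactly the kernels $\langle v_A\rangle,\langle v_B\rangle$, the pairs $p:=Av_B,\ q:=Av_0$ and $r:=Bv_A,\ s:=Bv_0$ are each right-independent in $\mathbb{H}^2$.

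The one step that needs a genuine argument — and the point I expect to be the main obstacle — is the existence of a suitable $c$; the clean way through is the following projective fact, which I would isolate as a lemma: for right-independent $p,q\in\mathbb{H}^2$, the map $\beta\mapsto[\,p\beta+q\,]$ is a bijection from $\mathbb{H}$ onto $\mathbb{HP}^1\setminus\{[p]\}$. (Writing $v=p\pi_1+q\pi_2$ in the basis $\{p,q\}$ and solving $p\beta+q=v\mu$, the $q$-coordinate forces $\mu=\pi_2^{-1}$, which is available exactly when $[v]\neq[p]$, and then $\beta=\pi_1\pi_2^{-1}$ is unique.) Consequently $[Ac]$ ranges over $\mathbb{HP}^1\setminus\{[p]\}$ as $\beta$ varies and $[Bc]$ over $\mathbb{HP}^1\setminus\{[r]\}$ as $\alpha$ varies; choosing any common value $[v]\in\mathbb{HP}^1\setminus\{[p],[r]\}$ (nonempty, as $\mathbb{HP}^1$ is infinite) yields $\alpha,\beta$ with $[Ac]=[Bc]$, so $M(c)$ is singular. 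The resulting $R=[\,v_A\mid v_B\mid c\,]$ is invertible since its columns are independent, all three lateral slices now have rank at most $1$, and $\rank(T)\le 3$. This projective-line bijection is exactly what lets me avoid a thicket of quaternionic case analysis; the only remaining bookkeeping I anticipate is checking the independence of $\{p,q\}$ and $\{r,s\}$ and the handling of degenerate slices.
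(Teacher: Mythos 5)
Your proposal is correct, and it shares the paper's overall skeleton --- dispose of the degenerate cases where a frontal slice has rank at most $1$, reduce the common-kernel situation to the $2\times 2\times 2$ case (Theorem \ref{222quaternion}), and otherwise apply invertible lateral (right-multiplication) operations to reach a form in which every lateral slice has rank at most $1$ --- but the execution of the key step is genuinely different. The paper performs an explicit sequence of column operations and then splits on coefficients: writing $\vec{e}=\vec{h}c_{1}+\vec{b}c_{2}$, $\vec{g}=\vec{h}c_{3}+\vec{b}c_{4}$, it treats $c_3\neq 0$ and $c_3=0$ separately, choosing operations ad hoc in each case to make the middle lateral slice rank one. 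You instead build a single change of basis $R=[\,v_A\mid v_B\mid c\,]$ from the right kernel vectors of the two frontal slices, and the existence of a suitable third column $c$ is delivered by your isolated lemma that $\beta\mapsto[\,p\beta+q\,]$ parametrizes $\mathbb{HP}^1\setminus\{[p]\}$ bijectively; since $[Ac]$ depends only on $\beta$ and $[Bc]$ only on $\alpha$, you can match them at any point of $\mathbb{HP}^1\setminus\{[Av_B],[Bv_A]\}$. This affine-chart argument is the new ingredient: it replaces the paper's coefficient chase with an invariant statement, at the cost of introducing the (easy but necessary) verifications that the kernels are one-dimensional and that $\{Av_B,Av_0\}$ and $\{Bv_A,Bv_0\}$ are right-independent, which you do correctly. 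One further point in your favor: your reduction of the $3\times 2\times 2$ case via entrywise conjugation composed with mode reversal, using $\overline{a_ib_jc_k}=\bar c_k\bar b_j\bar a_i$, is more careful than the paper's one-line appeal to ``working with $A^T$ and $B^T$,'' since over $\mathbb{H}$ plain transposition/mode reversal does not send simple tensors to simple tensors (noncommutativity obstructs $a_ib_jc_k=c_kb_ja_i$), whereas the conjugation-reversal map does and is a rank-preserving involution.
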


\begin{proof}
Let $ T=(
A;B)$ where
\[
   A=
  \begin{bmatrix}
    \vec{a} & \vec{b} & \vec{c}\\
  \end{bmatrix}, \hspace{2mm}
   B=
  \begin{bmatrix}
    \vec{d} & \vec{e} & \vec{f}\\
  \end{bmatrix},
\]
and $\vec{a}, \ldots, \vec{f}$ are 2-dimensional column vectors. If either $A$ or $B$ has rank no greater than 1,  we have
\begin{center}
    $\rank(T)\leq \rank((A;0))+\rank((0;B))\leq 1+2=3$.
\end{center}
Therefore, let us assume that $\rank(A)=\rank(B)=2$. Without loss of generality, we can assume that $\rank(\vec{a},\vec{b})=2$. Then, by a column operation, we have
\[
   T=\left(
  \begin{bmatrix}
    \vec{a} & \vec{b} & \vec{c}\\
  \end{bmatrix};
  \begin{bmatrix}
    \vec{d} & \vec{e} & \vec{f}\\
  \end{bmatrix}\right)\longrightarrow
    \left(\begin{bmatrix}
    \vec{a} & \vec{b} & 0\\
  \end{bmatrix};
  \begin{bmatrix}
    \vec{d} & \vec{e} & \vec{g}\\
  \end{bmatrix}\right).
\]
If $\vec{g}=0$, then $\rank(T)\leq 3$ by Theorem \ref{222quaternion}. Let us assume then that $\vec{g}\not=0$, and without loss of generality, that $\rank(\vec{e},\vec{g})=2$  (since $B$ has full rank). By further column operations, we have
\[
   \left(
  \begin{bmatrix}
    \vec{a} & \vec{b} & 0\\
  \end{bmatrix};
  \begin{bmatrix}
    \vec{d} & \vec{e} & \vec{g}\\
  \end{bmatrix}\right)\longrightarrow
   \left(\begin{bmatrix}
    \vec{h} & \vec{b} & 0\\
  \end{bmatrix};
  \begin{bmatrix}
    0 & \vec{e} & \vec{g}\\
  \end{bmatrix}\right).
\]
Since $\rank(\vec{h},\vec{b})=2$, we can write

\[ \vec{e}=\vec{h}c_{1}+\vec{b}c_{2} \text{ , \hspace{2mm}} \vec{g}=\vec{h}c_{3}+\vec{b}c_{4}.\]
If $c_{3}\not=0$, then
\begin{align*}
  \left(\begin{bmatrix}
    \vec{h} & \vec{b} & 0\\
  \end{bmatrix};
  \begin{bmatrix}
    0 & \vec{e} & \vec{g}\\
  \end{bmatrix}\right)&=
  \left(\begin{bmatrix}
    \vec{h} & \vec{b} & 0\\
  \end{bmatrix};
  \begin{bmatrix}
    0 & \vec{h}c_{1}+\vec{b}c_{2} & \vec{h}c_{3}+\vec{b}c_{4}\\
  \end{bmatrix}\right)\\
  &\longrightarrow
     \left(\begin{bmatrix}
    \vec{h} & \vec{b} & 0\\
  \end{bmatrix};
  \begin{bmatrix}
    0 & \vec{b}c_{5} & \vec{h}c_{3}+\vec{b}c_{4}\\
  \end{bmatrix}\right),
\end{align*}
which has rank at most 3 (we can write it as the sum of 3 simple tensors defined using the $2\times 2$ slices in the lateral direction). If $c_{3}=0$, then $c_{4}\not= 0$ and we have
\begin{align*}
  \left(\begin{bmatrix}
    \vec{h} & \vec{b} & 0\\
  \end{bmatrix};
  \begin{bmatrix}
    0 & \vec{e} & \vec{g}\\
  \end{bmatrix}\right)
  &=  \left(\begin{bmatrix}
    \vec{h} & \vec{b} & 0\\
  \end{bmatrix};
  \begin{bmatrix}
    0 & \vec{h}c_{1}+\vec{b}c_{2} & \vec{b}c_{4}\\
  \end{bmatrix}\right)\\
  &\longrightarrow
     \left(\begin{bmatrix}
    \vec{h} & \vec{h}c_{6}+\vec{b}c_{7} & 0\\
  \end{bmatrix};
  \begin{bmatrix}
    0 & (\vec{h}c_{6}+\vec{b}c_{7})c_{8} & \vec{b}c_{4}\\
  \end{bmatrix}\right),
\end{align*}
using the column operation $C_2\rightarrow C_{2}+C_{1}k_{1}+C_{3}k_{3}$ and choosing $k_1,k_3$ appropriately (depending on whether $c_1=0$ or $c_2=0$ for example). Then the rank is at most 3 by the same argument as above.

Finally, the $3\times 2\times 2$ case can be proven in a similar fashion by simply applying a rotation of the tensor and working with $A^T$ and $B^T$ instead. 
\end{proof}

\begin{theorem}\label{232quaternion}
Let $T$ be a $2\times 3\times 2$ quaternion tensor. Then $\rank(T) \leq 3$.
\end{theorem}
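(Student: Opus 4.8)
The plan is to reduce any $2\times 3\times 2$ tensor $T=(A_1;A_2;A_3)$, whose frontal slices $A_k$ are $2\times 2$ quaternion matrices, to one of two normal forms from which an explicit decomposition into three simple tensors can be read off. Throughout I use the three families of rank-preserving operations available here: left multiplication $A_k\mapsto PA_k$ of all slices by a common invertible $P$ (horizontal slice operations), right multiplication $A_k\mapsto A_kQ$ by a common invertible $Q$ (lateral slice operations), and real invertible combinations of the three frontal slices. The first reduction is immediate: if $A_1,A_2,A_3$ are linearly dependent over $\mathbb{R}$, a real frontal operation sets one slice to $0$, and dropping a zero frontal slice leaves a $2\times 2\times 2$ tensor, so $\rank(T)\le 3$ by Theorem \ref{222quaternion}. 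Hence I may assume $A_1,A_2,A_3$ are $\mathbb{R}$-linearly independent.

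Next I would isolate two explicit ``good'' configurations. First, suppose there exist invertible $P,Q$ making every $PA_kQ=\begin{bmatrix} x_k & y_k\\ 0 & z_k\end{bmatrix}$ upper triangular; equivalently (extending a vector to a basis over the division ring $\mathbb{H}$), suppose there are nonzero $\vec u,\vec w\in\mathbb{H}^2$ with $\vec u^{T}A_k\vec w=0$ for all $k$. Then $(PA_1Q;PA_2Q;PA_3Q)$ is the sum of the three simple tensors $e_1\otimes(x_1,x_2,x_3)\otimes e_1$, $e_2\otimes(z_1,z_2,z_3)\otimes e_2$ and $e_1\otimes(y_1,y_2,y_3)\otimes e_2$, so $\rank(T)\le 3$ by Lemma \ref{multilinear}. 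Second, suppose some matrix in the real span of the slices is invertible and, after a real frontal operation and a left multiplication, $A_1=I$ while $A_2$ is diagonalizable. Conjugating by the diagonalizing matrix (which preserves $A_1=I$) I may take $A_1=I$, $A_2=\diag(\beta_1,\beta_2)$ and $A_3=C=(C_{ij})$ arbitrary; then $T$ is the sum of
\begin{align*}
&e_1\otimes(1,\beta_1,C_{11}-C_{12})\otimes e_1,\qquad e_2\otimes(1,\beta_2,C_{22}-C_{21})\otimes e_2,\\
&(C_{12},C_{21})^{T}\otimes(0,0,1)\otimes(1,1)^{T},
\end{align*}
where the third tensor supplies both off-diagonal entries of $C$ at once and the first two absorb the resulting diagonal corrections, again giving $\rank(T)\le 3$.

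It remains to show that, after the reductions, one of these two configurations can always be reached, and this is the crux. When the real span of $A_1,A_2,A_3$ contains an invertible matrix I normalize $A_1=I$ and examine the pencil $\beta A_2+\gamma A_3$. If some nonzero member is diagonalizable, the second configuration applies. Otherwise every nonzero member of the pencil is a genuine Jordan block, and I must prove that this forces the three slices to be simultaneously triangularizable by equivalence, i.e.\ to admit the shared vectors $\vec u,\vec w$ above. Here I would pass to the complex adjoint $\chi$ of Lemma \ref{adjoint}: since $A\mapsto\chi_A$ is $\mathbb{R}$-linear, the hypothesis says the $4\times 4$ complex pencil $\beta\chi_{A_2}+\gamma\chi_{A_3}$ is non-diagonalizable for all real $\beta,\gamma$, and I would analyze the common Jordan structure to produce $\vec u$ and $\vec w$. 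The remaining case, in which every real combination of the slices is singular, must be handled separately by showing that such a $3$-dimensional real space of singular $2\times 2$ quaternion matrices has either a common right null vector or a common left null vector, which again yields the triangular configuration.

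The main obstacle is precisely this dichotomy step. The two explicit decompositions are routine once the normal form is in hand, and the generic tensor --- where $A_1$ is invertible and $\chi_{A_1^{-1}A_2}$ has distinct eigenvalues, hence $A_1^{-1}A_2$ is diagonalizable --- falls immediately into the second configuration. The difficulty lies entirely in the degenerate strata: the pencils all of whose members are non-diagonalizable, and the pencils consisting entirely of singular matrices. Because tensor rank is not upper semicontinuous, these measure-zero cases cannot be dismissed by a limiting argument from the generic one and genuinely require the quaternionic canonical-form analysis via $\chi$; I expect this to be the most delicate part of the proof.
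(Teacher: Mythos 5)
Your two explicit decompositions are correct, and your easy reductions are fine; in fact your second configuration ($A_1=I$, $A_2$ diagonal, $A_3$ arbitrary) is essentially the decomposition $S=S_1+S_2+S_3$ that the paper itself uses, and your case split (is some real combination of the slices diagonalizable?) parallels the paper's. But your proposal stops exactly where the theorem is hard, and you say so yourself: you never prove (a) that a pencil $\beta A_2+\gamma A_3$ all of whose nonzero members are non-diagonalizable must be simultaneously triangularizable, nor (b) that a real space of singular slices has a common one-sided null vector. Claim (b) is the lesser issue: it is true and can be proved by an elementary rank-one factorization argument (two singular members sharing neither a left nor a right factor have an invertible sum), and the paper avoids it entirely by decomposing any tensor with one singular slice directly (reduce that slice to $\left[\begin{smallmatrix}1&0\\0&0\end{smallmatrix}\right]$; then either the three lateral vectors give a rank-$3$ decomposition, or a frontal operation makes $b_{22}'c_{22}'\neq 0$ and an explicit three-term decomposition is written down). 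Claim (a) is the crux, and your proposed route --- pass to the $4\times 4$ complex adjoints and ``analyze the common Jordan structure'' --- is not a proof and is harder than you suggest: a real pencil of non-diagonalizable $4\times 4$ complex matrices need not be simultaneously triangularizable, and even if the adjoints were simultaneously triangularized, the triangularizing matrices would have to be of adjoint type to descend to the quaternionic vectors $\vec u,\vec w$ you need. Note also that the bad stratum is genuinely nonempty: for $A_2=\left[\begin{smallmatrix}1&1\\0&1\end{smallmatrix}\right]$, $A_3=\left[\begin{smallmatrix}1&i\\0&1\end{smallmatrix}\right]$ (both nonsingular), no nonzero real combination $\beta A_2+\gamma A_3+cI$ is diagonalizable over $\mathbb{H}$, so this case cannot be argued away; it must be handled, and your sketch does not handle it.

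The paper's resolution of the hard case uses a genuinely noncommutative input that is absent from your outline. When neither $A_2$ nor $A_3$ is diagonalizable, the paper does not attempt simultaneous triangularization; it applies unitary triangularization (\cite[Theorem 6.3]{Zhang}) and explicit similarity and frontal operations to reach the form $\left(I;\left[\begin{smallmatrix}i&0\\1&i_1\end{smallmatrix}\right];\left[\begin{smallmatrix}-\alpha&\gamma\\1&\beta\end{smallmatrix}\right]\right)$ with $\alpha\neq\beta$ pure imaginary, and then invokes the solvability of the quaternion quadratic equation $x^2+\alpha x+x\beta-\gamma=0$ with a root of nonzero real part (\cite[Theorem 2.3.1]{huang2013quaternion}); conjugating by $\left[\begin{smallmatrix}1&x\\0&1\end{smallmatrix}\right]$ then makes the third slice triangular with diagonal entries $-\alpha-x$ and $\beta+x$ of distinct real parts, hence non-equivalent, hence diagonalizable --- so a real combination of the two ``bad'' slices is diagonalizable after all, and the diagonal-slice decomposition finishes the proof. (In the residual situation where this reduction produces a purely nilpotent second slice, as in the example above, one falls back on the singular-slice case, which the paper has already settled by explicit decomposition.) This quadratic-equation mechanism, which has no analogue over $\mathbb{C}$ with real coefficients, is precisely what replaces the dichotomy you were unable to establish; until you either prove your claim (a) or import an argument of this kind, your proposal remains a strategy with its central step missing.
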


\begin{proof}
Let
 \[
   T= (A;B;C)=\left(
  \begin{bmatrix}
a_{11} & a_{12}  \\
a_{21} & a_{22}  \\
  \end{bmatrix};
  \begin{bmatrix}
b_{11} & b_{12}  \\
b_{21} & b_{22}  \\
     \end{bmatrix};
      \begin{bmatrix}
c_{11} & c_{12}  \\
c_{21} & c_{22}  \\
     \end{bmatrix}\right).
\]

\vspace{3mm}

\noindent If either $A, B$ or $C$ is singular (and if so, we may assume that it's $A$), then by rank-preserving row and column operations, the tensor can be reduced to 

 \[
   T=\left(
  \begin{bmatrix}
1 & 0  \\
0 & 0  \\
  \end{bmatrix};
  \begin{bmatrix}
b_{11}^{\prime} & b_{12}^{\prime}  \\
b_{21}^{\prime} & b_{22}^{\prime}  \\
     \end{bmatrix};
      \begin{bmatrix}
c_{11}^{\prime} & c_{12}^{\prime}  \\
c_{21}^{\prime} & c_{22}^{\prime}  \\
     \end{bmatrix}\right).
\]

\vspace{3mm}

\noindent If $b_{22}^{\prime}=c_{22}^{\prime}=0$, then $T$ is the sum of 3 simple tensors defined using the 3 nonzero vectors in the lateral direction ($\vec{v}=(1,b_{11}',c_{11}')$ is one such vector). Otherwise if either $b_{22}'\neq 0$ or $c_{22}'\neq 0$, then by adding a frontal slice to another we can assume that $b_{22}'c_{22}'\neq 0$, and we have the following decomposition into simple tensors $T=T_{1}+T_{2}+T_{3}$:

\begin{align*}
 &  T_{1}=\left(
  \begin{bmatrix}
0 & 0  \\
0 & 0  \\
  \end{bmatrix};
      \begin{bmatrix}
b_{12}^{\prime}(b_{22}^{\prime})^{-1}b_{21}^{\prime}  & b_{12}^{\prime}  \\
b_{22}^{\prime}(b_{22}^{\prime})^{-1}b_{21}^{\prime} & b_{22}^{\prime}  \\
     \end{bmatrix};
  \begin{bmatrix}
0 & 0  \\
0 &0  \\
     \end{bmatrix}\right),\\
&   T_{2}=\left(
  \begin{bmatrix}
0 & 0  \\
0 & 0  \\
  \end{bmatrix};
  \begin{bmatrix}
0 & 0  \\
0 &0  \\
     \end{bmatrix};
      \begin{bmatrix}
c_{12}^{\prime}(c_{22}^{\prime})^{-1}c_{21}^{\prime}  & c_{12}^{\prime}  \\
c_{22}^{\prime}(c_{22}^{\prime})^{-1}c_{21}^{\prime} & c_{22}^{\prime}  \\
     \end{bmatrix}\right),\\
&   T_{3}= \left(
  \begin{bmatrix}
1 & 0  \\
0 & 0  \\
  \end{bmatrix};
      \begin{bmatrix}
b_{11}^{\prime}-b_{12}^{\prime}(b_{22}^{\prime})^{-1}b_{21}^{\prime}  &0  \\
0 & 0  \\
     \end{bmatrix};
  \begin{bmatrix}
c_{11}^{\prime}-c_{12}^{\prime}(c_{22}^{\prime})^{-1}c_{21}^{\prime}  &0  \\
0 & 0  \\
     \end{bmatrix}\right).
\end{align*}

\vspace{3mm}

\noindent In both cases we see that $\rank(T) \leq 3$. 

Therefore, we may assume that $A,B$ and $C$ are nonsingular. By performing row and column operations, we can assume that    

     \[
   T=\left(
  \begin{bmatrix}
1 & 0  \\
0 & 1  \\
  \end{bmatrix};
      \begin{bmatrix}
b_{11} & b_{12}  \\
b_{21} & b_{22}  \\
     \end{bmatrix};
  \begin{bmatrix}
c_{11} & c_{12}  \\
c_{21} & c_{22}  \\
     \end{bmatrix}\right).
     \]
     
\vspace{3mm}

\noindent If either the second or the third frontal slice of $T$ is diagonalizable (without loss of generality assume the second one is diagonlizable), then $T$ can be further reduce to 

\begin{align}\label{diagcase of 232}
      S=\left(
  \begin{bmatrix}
1 & 0  \\
0 & 1  \\
  \end{bmatrix};
      \begin{bmatrix}
b_{11}^{\prime} & 0  \\
0 & b_{22}^{\prime}  \\
     \end{bmatrix};
  \begin{bmatrix}
c_{11}^{\prime} & c_{12}^{\prime}  \\
c_{21}^{\prime} & c_{22}^{\prime}  \\
     \end{bmatrix}\right),
\end{align}

\noindent which has the following decomposition $ S=S_{1}+S_{2}+S_{3}$ with $\rank(S_i)\leq 1$:

\begin{align*}
&   S_{1}=\left(
  \begin{bmatrix}
0 & 0  \\
0 & 0  \\
  \end{bmatrix};
      \begin{bmatrix}
0 & 0  \\
0 & 0  \\
     \end{bmatrix};
  \begin{bmatrix}
1 & c_{12}^{\prime}  \\
c_{21}^{\prime} & c_{21}^{\prime}c_{12}^{\prime}  \\
     \end{bmatrix}\right),\\
&   S_{2}=\left(
  \begin{bmatrix}
1 & 0  \\
0 & 0  \\
  \end{bmatrix};
      \begin{bmatrix}
b_{11}^{\prime} & 0  \\
0 & 0  \\
     \end{bmatrix};
  \begin{bmatrix}
c_{11}^{\prime}-1 & 0  \\
0 & 0  \\
     \end{bmatrix}\right),\\
&   S_{3}=\left(
  \begin{bmatrix}
0 & 0  \\
0 & 1  \\
  \end{bmatrix};
      \begin{bmatrix}
0 & 0  \\
0 & b_{22}^{\prime}  \\
     \end{bmatrix};
  \begin{bmatrix}
0 & 0   \\
0 & c_{22}^{\prime}- c_{21}^{\prime} c_{12}^{\prime} \\
     \end{bmatrix}\right).
\end{align*}
Otherwise, neither of them is diagonlizable. Then by \cite[Theorem 6.3]{Zhang} we can apply unitary triangularization for the second frontal slice to reduce $T$ to 

   \[
   S=\left(
  \begin{bmatrix}
1 & 0  \\
0 & 1  \\
  \end{bmatrix};
      \begin{bmatrix}
b_{11}^{\prime} & 0  \\
b_{21}^{\prime} & b_{22}^{\prime} \\
     \end{bmatrix};
  \begin{bmatrix}
c_{11}^{\prime} & c_{12}^{\prime}  \\
c_{21}^{\prime} & c_{22}^{\prime}  \\
     \end{bmatrix}\right).
     \]
Since the second frontal slice of $S$ is not diagonalizable, $b_{11}^{\prime}$ and $b_{22}^{\prime}$ must be equivalent. So we can assume by similarity that $b_{11}^{\prime}=b_{22}^{\prime}\in \mathbb{C}$. By adding a real multiple of the first frontal slice to the second, we can assume further that $b_{11}^{\prime}=b_{22}^{\prime}=ai$ for some $a\in \mathbb{R}$. Since the second slice is not singular, we have $a\not=0$. Then we can assume by rescaling the second slice that

   \[
   S=\left(
  \begin{bmatrix}
1 & 0  \\
0 & 1  \\
  \end{bmatrix};
      \begin{bmatrix}
i & 0  \\
b_{21}^{\prime} & i \\
     \end{bmatrix};
  \begin{bmatrix}
c_{11}^{\prime} & c_{12}^{\prime}  \\
c_{21}^{\prime} & c_{22}^{\prime}  \\
     \end{bmatrix}\right).
     \]
Denote $b_{21}^{\prime}=q_{0}+q_{1}i+q_{2}j+q_{3}k$ for some $q_{0}, q_{1}, q_{2},q_{3}\in\mathbb{R}$. We have

\[
  \begin{bmatrix}
1 & 0 \\
\frac{q_{3}j-q_{2}k}{2} & 1  \\
     \end{bmatrix}
    \begin{bmatrix}
i &  0 \\
b_{21}^{\prime} & i \\
     \end{bmatrix}
       \begin{bmatrix}
1 & 0  \\
-\frac{q_{3}j-q_{2}k}{2} & 1  \\
     \end{bmatrix}=
         \begin{bmatrix}
i & 0  \\
q_{0}+q_{1}i & i \\
     \end{bmatrix}.
\]
Since  the second slice is not diagonalizable by the assumption, ${q_{0}+q_{1}i}\not=0$. So we have
\[
  \begin{bmatrix}
1 & 0  \\
0 & \frac{1}{q_{0}+q_{1}i}  \\
     \end{bmatrix}
    \begin{bmatrix}
i & 0  \\
q_{0}+q_{1}i & i \\
     \end{bmatrix}
       \begin{bmatrix}
1 & 0  \\
0 & q_{0}+q_{1}i  \\
     \end{bmatrix}=
         \begin{bmatrix}
i & 0  \\
1 & i \\
     \end{bmatrix}.
\]
If we apply the same operations as above to the tensor $S$, we get the resulting tensor

   \[
   {S}^{(1)}=\left(
  \begin{bmatrix}
1 & 0  \\
0 & 1  \\
  \end{bmatrix};
      \begin{bmatrix}
i & 0  \\
1 & i \\
     \end{bmatrix};
  \begin{bmatrix}
c_{11}^{\prime\prime} & c_{12}^{\prime\prime}  \\
c_{21}^{\prime\prime} & c_{22}^{\prime\prime}  \\
     \end{bmatrix}\right).
     \]
Similarly, since the third frontal slice is not diagonalizable by the assumption, it can also be written as 

\begin{center}
      $\begin{bmatrix}
c_{11}^{\prime\prime} & c_{12}^{\prime\prime}  \\
c_{21}^{\prime\prime} & c_{22}^{\prime\prime}  \\
     \end{bmatrix}
     =P\begin{bmatrix}
m & 1  \\
0 & m \\
     \end{bmatrix}P^{-1}$
\end{center}
for some invertible matrix $P\in M_{n}(\mathbb{H})$ and some $m\in\mathbb{C}$. Therefore, by adding a multiple of the first frontal slice to the third as well as multiplying the third frontal by a real constant, $S^{(1)}$ can be further reduced to
   \[
   {S}^{(2)}=\left(
  \begin{bmatrix}
1 & 0  \\
0 & 1  \\
  \end{bmatrix};
      \begin{bmatrix}
i & 0  \\
1 & i \\
     \end{bmatrix};
P\begin{bmatrix}
i & 1  \\
0 & i \\
     \end{bmatrix}P^{-1}\right).
     \]
     Denote $P=
      \begin{bmatrix}
a & b  \\
c & d  \\
     \end{bmatrix}$ and decompose $P=P_{1}^{-1}P_{2}$ according to the following situations.
     
\begin{itemize}
    \item [(1)] If $a=0$, then define $ P_{1}=
           \begin{bmatrix}
0 & 1  \\
1 & 0  \\
     \end{bmatrix}$
      and 
      $P_{2}=\begin{bmatrix}
c & d  \\
0 & b  \\
     \end{bmatrix}
$.
\item [(2)]  If $a\not=0$, then define $  P_{1}=
           \begin{bmatrix}
1 & 0  \\
-ca^{-1} & 1  \\
     \end{bmatrix}$
      and 
      $P_{2}=\begin{bmatrix}
a & b  \\
0 & d-ca^{-1}b  \\
     \end{bmatrix}
$.
\end{itemize}
Then in either case, we have

   \[
   S^{(3)}=P_{1}{S}^{(2)}P_{1}^{-1}=\left(
  \begin{bmatrix}
1 & 0  \\
0 & 1  \\
  \end{bmatrix};
      P_{1}\begin{bmatrix}
i & 0  \\
1 & i \\
     \end{bmatrix}P_{1}^{-1};
P_{2}\begin{bmatrix}
i & 1  \\
0 & i \\
     \end{bmatrix}P_{2}^{-1}\right).
     \]

\noindent For (1), we can easily see that both the second and the third frontal slices of $S^{(3)}$ are  upper triangular. So, $S^{(3)}$ is the sum of 3 simple tensors defined using the 3 nonzero vectors in the lateral direction.\newline

\noindent For (2), we can compute

   \[
      P_{1}\begin{bmatrix}
i & 0  \\
1 & i \\
     \end{bmatrix}P_{1}^{-1}= \begin{bmatrix}
1 & 0  \\
-ca^{-1} & 1  \\
  \end{bmatrix}\begin{bmatrix}
i & 0  \\
1 & i \\
     \end{bmatrix}
     \begin{bmatrix}
1 & 0  \\
ca^{-1} & 1  \\
  \end{bmatrix}=\begin{bmatrix}
1 & 0  \\
1-ca^{-1}i+ica^{-1} & 1  \\
     \end{bmatrix}
          \]

and 
\begin{align*}
      P_{2}\begin{bmatrix}
i & 1  \\
0 & i \\
     \end{bmatrix}P_{2}^{-1}&= 
     \begin{bmatrix}
a & b  \\
0 & d-ca^{-1}b  \\
  \end{bmatrix}\begin{bmatrix}
i & 1  \\
0 & i \\
     \end{bmatrix}
     \begin{bmatrix}
a^{-1} & -a^{-1}b(d-ca^{-1}b)^{-1}  \\
0 & (d-ca^{-1}b)^{-1}  \\
  \end{bmatrix}\\
  &=\begin{bmatrix}
aia^{-1} & -aia^{-1}b(d-ca^{-1}b)^{-1}+(a+bi)(d-ca^{-1}b)^{-1}  \\
0 & (d-ca^{-1}b)i(d-ca^{-1}b)^{-1} \\
     \end{bmatrix}.
\end{align*}
Denote $c_{1}=1-ca^{-1}i+ica^{-1}$, $ b_{1}=-aia^{-1}b(d-ca^{-1}b)^{-1}+(a+bi)(d-ca^{-1}b)^{-1}$ and $d_{1}=(d-ca^{-1}b)$. We can write 
          
   \[
   S^{(3)}=\left(
  \begin{bmatrix}
1 & 0  \\
0 & 1  \\
  \end{bmatrix};
      \begin{bmatrix}
i & 0  \\
c_{1} & i \\
     \end{bmatrix};
\begin{bmatrix}
aia^{-1} & b_{1}  \\
0 & d_{1}i{d_{1}}^{-1} \\
     \end{bmatrix}\right).
     \]          
Note that $c_{1}\not=0$ by the assumption. Let $P_{3}=  \begin{bmatrix}
1 & 0  \\
0 & c_{1}  \\
  \end{bmatrix}$ and we have 
\begin{align*}
     S^{(4)}=P_{3}S^{(3)}P_{3}^{-1}&=\left(
  \begin{bmatrix}
1 & 0  \\
0 & 1  \\
  \end{bmatrix};
      \begin{bmatrix}
i & 0  \\
1 & c_{1}ic_{1}^{-1} \\
     \end{bmatrix};
\begin{bmatrix}
aia^{-1} & b_{1}c_{1}^{-1}  \\
0 & (c_{1}d_{1})i(c_{1}d_{1})^{-1} \\
     \end{bmatrix}\right)\\
    &=\left(
  \begin{bmatrix}
1 & 0  \\
0 & 1  \\
  \end{bmatrix};
      \begin{bmatrix}
i & 0  \\
1 & i_{1} \\
     \end{bmatrix};
\begin{bmatrix}
i_{2} & b_{1}c_{1}^{-1} \\
0 & i_{3} \\
     \end{bmatrix}\right),
\end{align*}
where $i_{1}=c_{1}ic_{1}^{-1}$, $i_{2}=aia^{-1}$ and $i_{3}=(c_{1}d_{1})i(c_{1}d_{1})^{-1}$ are all equivalent to $i$. Since the second frontal slice of $S^{(4)}$ is not diagonalizable, we have $i_{1}\not=-i$. So there exists $0\not=t\in \mathbb{R}$ such that $$\alpha:=-(i+ti_{2})\not= ( {i}_{1}+ti_{3}):=\beta.$$
Add the second frontal slice to $t$ times the third frontal slice, then $S^{(4)}$ is equivalent to

\begin{align*}
     S^{(5)}&=\left(
  \begin{bmatrix}
1 & 0  \\
0 & 1  \\
  \end{bmatrix};
      \begin{bmatrix}
i & 0  \\
1 & i_{1} \\
     \end{bmatrix};
\begin{bmatrix}
i+ti_{2} & tb_{1}c_{1}^{-1}  \\
1 & {i}_{1}+ti_{3} \\
     \end{bmatrix}\right)\\
     &=\left(
  \begin{bmatrix}
1 & 0  \\
0 & 1  \\
  \end{bmatrix};
      \begin{bmatrix}
i & 0  \\
1 & i_{1} \\
     \end{bmatrix};
\begin{bmatrix}
-\alpha & \gamma  \\
1 & \beta \\
     \end{bmatrix}\right),
\end{align*}
where $\gamma=tb_{1}c_{1}^{-1}$. We will show that the third frontal slice of $S^{(5)}$ is diagonalizable, so that $S^{(5)}$ reduces to the situation of Equality \ref{diagcase of 232}. Consider the following quaternion equation
\begin{equation*}\label{equation}
    x^{2}+\alpha x+x\beta-\gamma=0.
\end{equation*}
\noindent Since $\alpha\not=\beta$, according to \cite[Theorem 2.3.1]{huang2013quaternion} there exists a solution $x$ with $Re(x)\not=0$. Therefore, we have

\[
  \begin{bmatrix}
1 & -x  \\
0 & 1  \\
     \end{bmatrix}
    \begin{bmatrix}
-\alpha & \gamma  \\
1 & \beta \\
     \end{bmatrix}
       \begin{bmatrix}
1 & x  \\
0 & 1  \\
     \end{bmatrix}=
         \begin{bmatrix}
-\alpha-x & 0 \\
1 & \beta+x \\
     \end{bmatrix}.
\]
\noindent Note that both $\alpha$ and $\beta$ are pure imaginary numbers and $Re(x)\not=0$. We have
\begin{center}
    $Re(-\alpha-x)\not=Re(\beta+x)$,
\end{center}
which deduces that $-\alpha-x$ and $\beta+x$ must be non-equivalent. It follows that $         \begin{bmatrix}
-\alpha-x & 0 \\
1 & \beta+x \\
     \end{bmatrix}$ is diagonalizable, and thus so is $\begin{bmatrix}
-\alpha & \gamma  \\
1 & \beta \\
     \end{bmatrix}$ by similarity.

\end{proof}

Since we can view a $2\times 2\times 2$ quaternion tensor as a special case of the $2\times 2\times 3$ case, it is clear that a bound of 3 is the best possible bound since there already exist examples of tensors that have rank 3.

\section{The $2\times 3\times 3$ and $3\times 2\times 3$ cases}

We wish to bound the rank of a quaternion tensor with size $2\times 3\times 3$ (and subsequently the $3\times 3\times 2$ case) using the results of the previous sections. In order to do this, we will first need the following auxiliary lemma.

\begin{lemma}\label{sing}
Let $A,B\in M_{n}(\mathbb{H})$ be $n\times n$ matrices with entries in $\mathbb{H}$. If $A$ is invertible, then there exists an $x_{0}\in \mathbb{H}$ such that $x_{0}A+B$ is singular.
\end{lemma}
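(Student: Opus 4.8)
The plan is to reduce the statement to the existence of a \emph{left eigenvalue} of an associated matrix, and then invoke the fact that every square quaternionic matrix possesses one. Since $A$ is invertible, I would first factor
\[
x_{0}A+B=\bigl(x_{0}I+BA^{-1}\bigr)A,
\]
which is valid because $(x_{0}I)A=x_{0}A$ (the scalar $x_{0}$ pulls out of the matrix product from the left, entry by entry) and $BA^{-1}A=B$. As right multiplication by the invertible matrix $A$ is rank-preserving, $x_{0}A+B$ is singular if and only if $x_{0}I+C$ is singular, where $C:=BA^{-1}$. A nonzero column vector $\vec{v}$ with $(x_{0}I+C)\vec{v}=0$ is precisely one satisfying $C\vec{v}=-x_{0}\vec{v}$; that is, $-x_{0}$ is a left eigenvalue of $C$. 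Hence it suffices to produce a left eigenvalue $\lambda$ of $C$ and set $x_{0}=-\lambda$.

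The crux is therefore the existence of a left eigenvalue for the quaternionic matrix $C$. I would emphasize that the obvious algebraic shortcuts fail here: although every quaternionic matrix is unitarily similar to an upper-triangular matrix $R$ whose left eigenvalues are exactly its diagonal entries, left eigenvalues are \emph{not} invariant under similarity, since for $C=U^{-1}RU$ the relation $C\vec{v}=\lambda\vec{v}$ would require $\lambda$ to commute with the entries of $U$, which need not hold. Thus the Schur-type triangularization available over $\mathbb{H}$ does not by itself yield a left eigenvalue of $C$ itself, and this is the genuine obstacle. The correct tool is the classical result that every $n\times n$ matrix over $\mathbb{H}$ admits at least one left eigenvalue (see \cite{Rodman}), whose proof is topological in nature (a degree/obstruction argument on $\mathbb{HP}^{n-1}$), rather than algebraic.

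Granting this, the argument concludes immediately: choosing such a $\lambda$ and setting $x_{0}=-\lambda$ gives $(x_{0}I+C)\vec{v}=(C-\lambda I)\vec{v}=0$ for some $\vec{v}\neq 0$, so $x_{0}I+C$ is singular and therefore $x_{0}A+B$ is singular. For the sizes actually needed in the application ($n\le 3$), I would note that one can avoid the general existence theorem and instead solve the relevant quaternionic polynomial equation for the left eigenvalue directly, in the spirit of the quadratic-equation results of \cite{huang2013quaternion} already used in the $2\times 3\times 2$ case; but the clean and uniform route is the reduction above combined with the left-eigenvalue existence statement.
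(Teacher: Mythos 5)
Your proof is correct and takes essentially the same route as the paper's: both factor $x_{0}A+B=(x_{0}I+BA^{-1})A$ and invoke the existence of a left eigenvalue of $BA^{-1}$ to make $x_{0}I+BA^{-1}$ singular (the paper cites \cite[Theorem 5.3]{Zhang} for this rather than \cite{Rodman}). Your additional caveats about similarity not preserving left eigenvalues and the topological nature of the existence theorem are sound but not needed for the argument.
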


\begin{proof}
By \cite[Theorem 5.3]{Zhang}, every $n \times n$ quaternion matrix has at least one left eigenvalue in $\mathbb{H}$, which means that we can always choose $x_{0}\in \mathbb{H}$ such that $x_{0}I+BA^{-1}$ is singular. Therefore, $x_{0}A+B$ is also singular.
\end{proof}

\begin{theorem}\label{233quaternion}
Let $T$ be a $2\times 3\times 3$ or a $3\times 3\times 2$ quaternion tensor. Then $\rank(T)\leq 4$.
\end{theorem}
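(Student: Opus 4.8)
The plan is to view the $2\times3\times3$ tensor through its two horizontal slices. Writing $T$ as the pair of $3\times 3$ quaternion matrices $A$ and $B$ obtained by fixing the first (size $2$) index, the mode-$1$ direction acts on the left on the pair $(A,B)$, the frontal (mode-$2$) direction acts by real row operations on both slices simultaneously, and the lateral (mode-$3$) direction acts on the right by simultaneous column operations. All of these are rank-preserving by the discussion preceding Lemma \ref{multilinear} (left operations on mode $1$, right operations on mode $3$, real operations on mode $2$). The strategy is to reduce $T$ so that it splits as a $2\times3\times2$ sub-tensor plus a rank-one piece.

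First I would arrange for one horizontal slice to be singular. If $A$ or $B$ is already singular there is nothing to do (relabel so that $B$ is the singular one). Otherwise $A$ is invertible, and Lemma \ref{sing} supplies an $x_{0}\in\mathbb{H}$ with $x_{0}A+B$ singular; replacing $B$ by $x_{0}A+B$ is exactly the left mode-$1$ operation with the invertible matrix $\left[\begin{smallmatrix}1&0\\x_{0}&1\end{smallmatrix}\right]$, so it preserves rank and makes the second slice singular. Now $B$ is a singular $3\times3$ quaternion matrix, hence of rank at most $2$, so its columns are right-linearly dependent. I would then use a single lateral (right mode-$3$) column operation to clear one of its columns, say the third (after a harmless reordering of columns if needed); this operation is applied simultaneously to $A$, but that is harmless. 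At this stage the lateral slice indexed by $3$ is zero in the $B$-row and is supported entirely on the $A$-row, so it is a rank-one slice.

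Finally I would split $T=T'+T''$ along the mode-$3$ index, where $T'$ collects lateral indices $1,2$ and $T''$ collects lateral index $3$. Then $T'$ is a $2\times3\times2$ tensor, so $\rank(T')\le 3$ by Theorem \ref{232quaternion}, while $T''$ is the rank-one piece just described, giving $\rank(T)\le\rank(T')+\rank(T'')\le 3+1=4$ by subadditivity of rank. The $3\times3\times2$ case follows by the symmetric argument obtained from swapping the roles of the left (mode-$1$) and right (mode-$3$) actions, which interchanges the two size-$3$ outer directions while preserving rank and converts it to the $2\times3\times3$ case already handled.

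The main thing to watch is that every reduction uses only the allowed one-sided or real operations: left on mode $1$, right on mode $3$, real on mode $2$. Arbitrary two-sided quaternion operations need not preserve tensor rank (as the example in Section \ref{definitions} shows), so the delicate points are invoking Lemma \ref{sing} to produce the singular slice via a left operation and clearing the column of $B$ by a right operation, since these are precisely the steps that keep the argument inside the rank-preserving regime.
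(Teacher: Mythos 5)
Your proposal is correct, and its first half coincides with the paper's opening: split $T$ into its two $3\times 3$ horizontal slices $A,B$; if neither is singular, use Lemma \ref{sing} together with the left mode-$1$ operation $\left[\begin{smallmatrix}1&0\\x_{0}&1\end{smallmatrix}\right]$ to make the second slice singular; then clear its third column by a right mode-$3$ operation. Where you genuinely diverge is the finish. The paper never splits off the cleared lateral slice; instead it runs a case analysis (on whether $a_{13}',a_{23}',a_{33}'$ vanish, then $c_{11},c_{12}$, then $a_{21}'',a_{31}''$), invoking Theorem \ref{223quaternion} in one branch and writing an explicit sum of four simple tensors in the main branch. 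You instead observe that after the column clearing the tensor is the sum of a $2\times3\times2$ tensor (lateral indices $1,2$) and a piece supported on the cleared lateral slice, and conclude by Theorem \ref{232quaternion} plus subadditivity: $3+1=4$. This is sound: Theorem \ref{232quaternion} is proved earlier and independently, so there is no circularity, and your leftover piece really is simple because its mode-$1$ and mode-$3$ factors can be taken to be the real vectors $(1,0)$ and $(0,0,1)$, so noncommutativity causes no trouble. Your route is shorter and handles the ``both slices singular'' case uniformly; what the paper's longer route buys is independence from Theorem \ref{232quaternion} (by far the most delicate proof in the paper, resting on diagonalizability results over $\mathbb{H}$) and an explicit list of the four simple tensors, consistent with the paper's stated goal of producing explicit decompositions.

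One caveat, which you share with the paper rather than introduce: for the $3\times3\times2$ case, literally swapping modes $1$ and $3$ is not rank-preserving over $\mathbb{H}$, since reversing a product of quaternions requires conjugation ($\overline{pqr}=\bar r\,\bar q\,\bar p$). Either compose the index swap with entrywise conjugation (which does carry simple tensors to simple tensors), or rerun your argument symmetrically using lateral slices, left mode-$1$ clearing, and a right-sided analogue of Lemma \ref{sing} (which holds by the same left-eigenvalue argument applied to $A^{-1}B$). The paper's one-line ``use the transpose'' has the same issue, so this is a patch both proofs need, not a defect of your approach specifically.
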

\begin{proof}
Let
 \[
   T=\left(
         \begin{bmatrix}
a_{11} & a_{12}  & a_{13} \\
b_{11} & b_{12}  & b_{13} \\
     \end{bmatrix};
  \begin{bmatrix}
a_{21} & a_{22}  & a_{23} \\
b_{21} & b_{22}  & b_{23} \\
     \end{bmatrix};
  \begin{bmatrix}
a_{31} & a_{32}  & a_{33} \\
b_{31} & b_{32}  & b_{33} \\
  \end{bmatrix}\right).
\]

\vspace{3mm}

\noindent We can write $T=T_{1}+T_{2}$, where 

\begin{align*}
&   T_{1}=\left(
      \begin{bmatrix}
a_{11} & a_{12}  & a_{13} \\
0 & 0  & 0 \\
     \end{bmatrix};
  \begin{bmatrix}
a_{21} & a_{22}  & a_{23} \\
0 & 0  & 0 \\
     \end{bmatrix};
 \begin{bmatrix}
a_{31} & a_{32}  & a_{33} \\
0 & 0  & 0 \\
  \end{bmatrix}\right),\\
&   T_{2}=\left(
      \begin{bmatrix}
0 & 0  & 0 \\
b_{11} & b_{12}  & b_{13} \\
     \end{bmatrix};
  \begin{bmatrix}
0 & 0  & 0 \\
b_{21} & b_{22}  & b_{23} \\
     \end{bmatrix};
      \begin{bmatrix}
0 & 0  & 0 \\
b_{31} & b_{32}  & b_{33} \\
  \end{bmatrix}\right).
\end{align*}
Consider the matrices
 \[
   A=
  \begin{bmatrix}
a_{11} & a_{12}  & a_{13} \\
a_{21} & a_{22}  & a_{23} \\
a_{31} & a_{32}  & a_{33} \\
  \end{bmatrix},\ \ \ 
  B=
   \begin{bmatrix}
b_{11} & b_{12}  & b_{13} \\
b_{21} & b_{22}  & b_{23} \\
b_{31} & b_{32}  & b_{33} \\
  \end{bmatrix}.
\]
Notice that $\rank(T_{1})=\rank(A)$ and $\rank(T_{2})=\rank(B)$. If both $A$ and $B$ have rank at most 2, then
\[\rank(T)\leq \rank(T_{1})+\rank(T_{2})\leq 2+2=4.\]

Therefore, without loss of generality, let us assume that $\rank(A)=3$. By Lemma \ref{sing}, there exists $x_{0}\in \mathbb{H}$ such that

 \[
   C=x_{0}A+B=
  \begin{bmatrix}
c_{11} & c_{12}  & c_{13} \\
c_{21} & c_{22}  & c_{23} \\
c_{31} & c_{32}  & c_{33} \\
  \end{bmatrix}
\]
is singular. Therefore $\rank(C)\leq 2$ and we can assume that
\[
   C=
  \begin{bmatrix}
c_{11} & c_{12}  & c_{13} \\
c_{21} & c_{22}  & c_{23} \\
c_{31} & c_{32}  & c_{33} \\
  \end{bmatrix}\longrightarrow
   \begin{bmatrix}
c_{11} & c_{12}  & 0 \\
c_{21} & c_{22}  & 0 \\
c_{31} & c_{32}  & 0 \\
  \end{bmatrix}.
\]

\vspace{3mm}

By rank-preserving row and column operations, we can write

\begin{align*}
   T&=\left(
      \begin{bmatrix}
a_{11} & a_{12}  & a_{13} \\
b_{11} & b_{12}  & b_{13} \\
     \end{bmatrix};
  \begin{bmatrix}
a_{21} & a_{22}  & a_{23} \\
b_{21} & b_{22}  & b_{23} \\
     \end{bmatrix};
  \begin{bmatrix}
a_{31} & a_{32}  & a_{33} \\
b_{31} & b_{32}  & b_{33} \\
  \end{bmatrix}\right),\\
&\longrightarrow
   \left(
       \begin{bmatrix}
a_{11} & a_{12}  & a_{13} \\
c_{11} & c_{12}  & c_{13} \\
     \end{bmatrix};
  \begin{bmatrix}
a_{21} & a_{22}  & a_{23} \\
c_{21} & c_{22}  & c_{23} \\
     \end{bmatrix};
      \begin{bmatrix}
a_{31} & a_{32}  & a_{33} \\
c_{31} & c_{32}  & c_{33} \\
  \end{bmatrix}\right),\\
&\longrightarrow
   \left(
        \begin{bmatrix}
a_{11} & a_{12}  & a_{13}^{\prime} \\
c_{11} & c_{12}  & 0 \\
     \end{bmatrix};
  \begin{bmatrix}
a_{21} & a_{22}  & a_{23}^{\prime} \\
c_{21} & c_{22}  & 0 \\
     \end{bmatrix};
     \begin{bmatrix}
a_{31} & a_{32}  & a_{33}^{\prime} \\
c_{31} & c_{32}  & 0 \\
  \end{bmatrix}\right).
\end{align*}

\vspace{3mm}

\noindent If $a_{13}^{\prime}=a_{23}^{\prime}=a_{33}^{\prime}=0$, then we immediately have $\rank(T)\leq 4$. Otherwise, we may that assume $a_{13}^{\prime}\not =0$. By column operations, the tensor can be reduced to 

 \[
  \left(
       \begin{bmatrix}
0 & 0  & a_{13}^{\prime} \\
c_{11} & c_{12}  & 0 \\
     \end{bmatrix};
  \begin{bmatrix}
a_{21}^{\prime} & a_{22}^{\prime}  & a_{23}^{\prime} \\
c_{21} & c_{22}  & 0 \\
     \end{bmatrix};
      \begin{bmatrix}
a_{31}^{\prime} & a_{32}^{\prime}  & a_{33}^{\prime} \\
c_{31} & c_{32}  & 0 \\
  \end{bmatrix}\right).
\]

\vspace{3mm}

\noindent If $c_{11}= c_{12}=0$, then $T$ is the sum of a $2\times 2\times 3$ tensor and a simple tensor, so we again have $\rank(T)\leq 4$ by Theorem \ref{223quaternion}. Therefore, by switching columns if necessary, we can assume that $c_{12}\not =0$. After performing further column operations we can reduce $T$ to 

\[
   \left(
   \begin{bmatrix}
0 & 0  & a_{13}^{\prime} \\
0 & c_{12}  & 0 \\
     \end{bmatrix};
  \begin{bmatrix}
a_{21}^{\prime\prime} & a_{22}^{\prime}  & a_{23}^{\prime} \\
c_{21}^{\prime} & c_{22}  & 0 \\
     \end{bmatrix};
       \begin{bmatrix}
a_{31}^{\prime\prime} & a_{32}^{\prime}  & a_{33}^{\prime} \\
c_{31}^{\prime} & c_{32}  & 0 \\
  \end{bmatrix}\right).
\]

\vspace{3mm}

\noindent If $a_{31}^{\prime\prime}=a_{21}^{\prime\prime}=0$, then $T$ is the sum of 4 simple tensors defined from the 4 nonzero $1\times 3$ vectors in the lateral direction, and we immediately have $\rank(T)\leq 4$. We can therefore assume that $a_{31}^{\prime\prime},a_{21}^{\prime\prime}\neq 0$  (by adding one matrix to the other if only one was nonzero). Now the remaining tensor can be decomposed as the sum of $T_{1}, T_{2}, T_{3}$ and  $T_{4}$ defined below, showing that $\rank(T)\leq 4$:

\begin{align*}
& T_{1}=\left(
      \begin{bmatrix}
0 & 0  & 0 \\
0 & c_{12}  & 0 \\
     \end{bmatrix};
  \begin{bmatrix}
0 & 0  & 0 \\
0 & c_{22}-c_{21}^{\prime}(a_{21}^{\prime\prime})^{-1}a_{22}^{\prime}   & 0 \\
     \end{bmatrix};
       \begin{bmatrix}
0 & 0  & 0 \\
0 & c_{32}-c_{31}^{\prime}(a_{31}^{\prime\prime})^{-1}a_{32}^{\prime}   & 0 \\
  \end{bmatrix}\right),\\
 & T_{2}= \left(
        \begin{bmatrix}
0 & 0  & 0\\
0 & 0 & 0 \\
     \end{bmatrix};
  \begin{bmatrix}
0 & 0  & 0 \\
0& 0  & 0 \\
     \end{bmatrix};
     \begin{bmatrix}
a_{31}^{\prime\prime} & a_{31}^{\prime\prime}(a_{31}^{\prime\prime})^{-1}a_{32}^{\prime}   & 0 \\
c_{31}^{\prime} & c_{31}^{\prime}(a_{31}^{\prime\prime})^{-1}a_{32}^{\prime}  & 0 \\
  \end{bmatrix}
\right),\\
& T_{3}=\left(
      \begin{bmatrix}
0 & 0  & 0\\
0 & 0 & 0 \\
     \end{bmatrix};
  \begin{bmatrix}
a_{21}^{\prime\prime} & a_{21}^{\prime\prime}(a_{21}^{\prime\prime})^{-1}a_{22}^{\prime}   & 0 \\
c_{21}^{\prime} & c_{21}^{\prime}(a_{21}^{\prime\prime})^{-1}a_{22}^{\prime}  & 0 \\
  \end{bmatrix};
      \begin{bmatrix}
0 & 0  & 0 \\
0& 0  & 0 \\
     \end{bmatrix}\right),\\
&  T_{4}= \left(
      \begin{bmatrix}
0 & 0  & a_{13}^{\prime} \\
0 & 0  & 0 \\
     \end{bmatrix};
  \begin{bmatrix}
0 & 0  & a_{23}^{\prime} \\
0 & 0  & 0 \\
     \end{bmatrix};
       \begin{bmatrix}
0 & 0  & a_{33}^{\prime} \\
0 & 0  & 0 \\
  \end{bmatrix}\right).
\end{align*}

\vspace{3mm}

Finally, we can repeat the above arguments for the $3\times 3\times 2$ case using the transpose of the matrices involved.
\end{proof}

It is not clear whether the bound of 4 is the best possible for the $2\times 3\times 3$ and $3\times 3\times 2$ cases. On the other hand, we can show that the tensor

\[  
T = \left(
      \begin{bmatrix}
1 & 0  & 0 \\
0 & 1  & 0 \\
0 & 0  & 1 \\
     \end{bmatrix};
       \begin{bmatrix}
0 & 0  & 1\\
0 & 1  & 0 \\
0& 0  & 0 \\
     \end{bmatrix}\right)
\]

\vspace{3mm}

\noindent has rank 4 by Lemma \ref{dia2} and Theorem \ref{323quaternion} below. The proof of the $3\times 2\times 3$ case however will require a more delicate argument than the one used in the $2\times 3\times 3$ case. 

\begin{theorem}\label{323quaternion}
Let $T$ be a $3\times 2\times 3$ quaternion tensor. Then $\rank(T)\leq 4$.
\end{theorem}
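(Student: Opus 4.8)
The plan is to view $T$ as a pair of $3\times 3$ frontal slices $T=(A;B)$ with $A,B\in M_3(\mathbb{H})$, and to keep track of which operations are permitted: left multiplication of both slices by an invertible matrix (horizontal/row operations), right multiplication of both slices by an invertible matrix (lateral/column operations), and \emph{real} recombinations of the two frontal slices (since $\mathbb{R}$ is central). Unlike the $2\times 3\times 3$ case of Theorem \ref{233quaternion}, I cannot form the combination $x_0A+B$ for a genuine quaternion $x_0$, which is precisely why Lemma \ref{sing} is not directly available and a more delicate argument is needed.

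First I would dispose of the case in which either frontal slice is singular. After a frontal swap (a real operation) I may assume $A$ is singular. Since $\mathbb{H}$ is a division ring, the columns of $A$ are right-linearly dependent, so a lateral (column) operation applied to both slices produces a zero third column in $A$, giving $T=([\,\vec a_1\ \vec a_2\ 0\,];[\,\vec b_1\ \vec b_2\ \vec b_3\,])$. The third lateral slice then only sees $B$, so it splits off as a single simple tensor $T''$ with $\rank(T'')\le 1$, while the remainder $T'$ is supported on the first two lateral slices, i.e.\ a $3\times 2\times 2$ tensor with $\rank(T')\le 3$ by Theorem \ref{223quaternion}. Hence $\rank(T)\le 4$. (This also covers the singular-pencil situation, since then $A$ itself is singular.)

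For the remaining case both $A$ and $B$ are invertible. Right-multiplying both slices by $A^{-1}$ normalizes $T\sim(I;M)$ with $M=BA^{-1}$ invertible, and conjugation $M\mapsto PMP^{-1}$ is realized by the rank-preserving pair (row operation $P$, column operation $P^{-1}$), which fixes the first slice $I$. Using the quaternionic Jordan canonical form (\cite{Rodman}) I conjugate $M$ to $\bigoplus_i J_{k_i}(\lambda_i)$ with $\lambda_i\in\mathbb{C}$. If $M$ is diagonalizable, then the singleton $\{M\}$ is trivially simultaneously diagonalizable and Lemma \ref{dia2} gives $\rank(T)=3$. Otherwise the Jordan type is either $J_2(\lambda)\oplus J_1(\mu)$ or a single block $J_3(\lambda)$.

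The crux is the following peeling statement: there is a rank-one matrix $R$ with $M-R$ diagonalizable over $\mathbb{H}$. Granting this, $T=(0;R)+(I;M-R)$ is a simple tensor plus a rank-$3$ tensor (the first slice of the remainder is $I$, so Lemma \ref{dia2} applies), whence $\rank(T)\le 4$. For $J_2(\lambda)\oplus J_1(\mu)$ this is immediate: take $R=e_1e_2^{T}$, so $M-R=\diag(\lambda,\lambda,\mu)$. The single block $J_3(\lambda)$ is the main obstacle. Here I would take $R=\epsilon\,e_3e_1^{T}$, so that
\[
M-R=\begin{bmatrix}\lambda&1&0\\0&\lambda&1\\-\epsilon&0&\lambda\end{bmatrix},
\]
a complex matrix whose complex adjoint is $\chi_{M-R}=\diag(M-R,\overline{M-R})$; its six eigenvalues are the numbers $\lambda+\nu_i$ together with their conjugates, where $\nu_1,\nu_2,\nu_3$ are the three distinct cube roots of $-\epsilon$. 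For all but finitely many real $\epsilon\neq 0$ these six numbers are distinct, so $\chi_{M-R}$ is diagonalizable and hence, by Lemma \ref{adjoint}, $M-R$ is diagonalizable over $\mathbb{H}$. Verifying this genericity (that a value of $\epsilon$ avoiding the finitely many conjugate-pair collisions exists) is the one technical point requiring care; everything else reduces to the already-established Lemmas \ref{dia2} and \ref{adjoint} and to Theorem \ref{223quaternion}.
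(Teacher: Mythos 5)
Your proof is correct, but in the main (invertible) case it takes a genuinely different route from the paper. Both arguments share the same skeleton: dispose of a singular frontal slice by splitting off a simple tensor plus a smaller tensor covered by Theorem \ref{223quaternion}, and otherwise normalize to $(I;M)$ and exhibit $T$ as (simple tensor) $+$ $(I;\,\text{diagonalizable matrix})$, invoking Lemma \ref{dia2}. The difference is how one finds the rank-one perturbation. The paper never forms $M=BA^{-1}$ abstractly; instead it runs a long chain of explicit rank-preserving row/column operations (which forces it to separately handle the cases where some horizontal or lateral slice fails to have full rank) until $M$ has a sparse form with entries $w,x,y,z$, and then it tunes the perturbation by hand: it computes the degree-$6$ characteristic polynomial of the complex adjoint, $\lambda^6-2(au-bv)\lambda^3+(u^2+v^2)(a^2+b^2+c^2+d^2)$, and picks real $u,v$ with $au-bv=0$ to force distinct roots. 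You instead invoke the quaternionic Jordan canonical form (available in Rodman, which the paper already cites for Lemma \ref{adjoint}), which cleanly isolates the only hard case as a single block $J_3(\lambda)$ and resolves it with the cyclic perturbation $\epsilon\, e_3e_1^{T}$, whose characteristic polynomial $(t-\lambda)^3+\epsilon$ visibly has distinct roots. Your route is shorter and conceptually sharper (it identifies non-diagonalizable Jordan structure as the sole obstruction), at the cost of relying on a heavier classification theorem; the paper's route is more elementary and fully algorithmic. One simplification to your last step: the genericity fuss over $\epsilon$ is unnecessary, since $\chi_{M-R}=\diag(M-R,\overline{M-R})$ is block diagonal with each block diagonalizable (distinct eigenvalues), hence diagonalizable for \emph{every} $\epsilon\neq 0$ --- indeed you can skip the adjoint altogether, as a complex matrix with distinct eigenvalues is diagonalizable over $\mathbb{C}$ and a fortiori over $\mathbb{H}$.
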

\begin{proof}
Let 
\[  
  T=(A;B)=\left(
      \begin{bmatrix}
a_{11} & a_{12}  & a_{13} \\
a_{21} & a_{22}  & a_{23} \\
a_{31} & a_{32}  & a_{33} \\
     \end{bmatrix};
       \begin{bmatrix}
b_{11} & b_{12}  & b_{13} \\
b_{21} & b_{22}  & b_{23} \\
b_{31} & b_{32}  & b_{33} \\
     \end{bmatrix}\right).
\]

\vspace{3mm}

\noindent If either $A$ or $B$ is singular (and without loss of generality assume that it's $A$), then we can use row operations to reduce the tensor to 

\[  
\left(
      \begin{bmatrix}
a_{11} & a_{12}  & a_{13} \\
a_{21} & a_{22}  & a_{23} \\
0 & 0  & 0 \\
     \end{bmatrix};
       \begin{bmatrix}
b_{11} & b_{12}  & b_{13} \\
b_{21} & b_{22}  & b_{23} \\
b_{31} & b_{32}  & b_{33} \\
     \end{bmatrix}\right),
\]

\vspace{3mm}

\noindent which is the sum of a $2\times 2\times 3$ tensor (using the first 2 horizontal slices) and a simple tensor defined by the row vector $(b_{31}, b_{32} ,b_{33})$. Then by Theorem \ref{223quaternion}, $\rank(T) \leq 3+1\leq 4$.
For the same reason, if any horizontal or lateral slice of the tensor does not have maximal rank, then $\rank(T)\leq 4$. For example, the first lateral slice of $T$ is the matrix

\[
\begin{bmatrix}
a_{11} & b_{11}  \\
a_{21} & b_{21} \\
a_{31} & b_{31}  \\
\end{bmatrix}.
\]

\vspace{3mm}

\noindent If it has rank 1, then $T$ can be written as the sum of a $3\times 2\times 2$ tensor and a simple tensor, proving that $\rank(T)\leq 4$. The maximal rank of any horizontal or lateral slice is 2.

Let us therefore assume that both $A$ and $B$ are nonsingular, and that any horizontal and lateral slice has rank 2. This means that there is at least one nonzero entry in every row and column of $A$ and $B$. It also means that we cannot have more than one row or column of zeros in any horizontal or lateral slice. 

Start by performing rank-preserving row operations to simplify the first column. Note that we cannot have equal first column vectors for $A$ and $B$ since this would contradict the rank assumption on the first lateral slice:

\[  
  T=\left(
      \begin{bmatrix}
a_{11} & a_{12}  & a_{13} \\
a_{21} & a_{22}  & a_{23} \\
a_{31} & a_{32}  & a_{33} \\
     \end{bmatrix};
       \begin{bmatrix}
b_{11} & b_{12}  & b_{13} \\
b_{21} & b_{22}  & b_{23} \\
b_{31} & b_{32}  & b_{33} \\
     \end{bmatrix}\right)\longrightarrow
       \left(\begin{bmatrix}
1 & a_{12}^{\prime}  & a_{13}^{\prime} \\
0 & a_{22}^{\prime}  & a_{23}^{\prime} \\
0 & a_{32}^{\prime}  & a_{33}^{\prime} \\
     \end{bmatrix};
       \begin{bmatrix}
0 & b_{12}^{\prime}  & b_{13}^{\prime} \\
1 & b_{22}^{\prime}  & b_{23}^{\prime} \\
0 & b_{32}^{\prime}  & b_{33}^{\prime} \\
     \end{bmatrix}\right).
\]

\vspace{3mm}

\noindent Similarly let us perform rank-preserving column operations to simplify the third row. We can also make $a_{12}'=0$ using the first column:
 
 \[  
  \longrightarrow
       \left(\begin{bmatrix}
1 & a_{12}^{\prime\prime}  & a_{13}^{\prime\prime} \\
0 & a_{22}^{\prime\prime}  & a_{23}^{\prime\prime} \\
0 & 0  & 1 \\
     \end{bmatrix};
       \begin{bmatrix}
0& b_{12}^{\prime\prime}  & b_{13}^{\prime\prime} \\
1 & b_{22}^{\prime\prime}  & b_{23}^{\prime\prime} \\
0 & 1  & 0 \\
     \end{bmatrix}\right)
     \longrightarrow
       \left(\begin{bmatrix}
1 & 0  & a_{13}^{\prime\prime} \\
0 & a_{22}^{\prime\prime}  & a_{23}^{\prime\prime} \\
0 & 0  & 1 \\
     \end{bmatrix};
       \begin{bmatrix}
0& b_{12}^{\prime\prime}  & b_{13}^{\prime\prime} \\
1 & b_{22}^{\prime\prime\prime}  & b_{23}^{\prime\prime} \\
0 & 1  & 0 \\
     \end{bmatrix}\right).
\]

\vspace{3mm}

\noindent Next we can use row operations to make $a_{23}'' = b_{12}''=0$:

\[
      \longrightarrow
       \left(\begin{bmatrix}
1 & 0  & a_{13}^{\prime\prime} \\
0 & a_{22}^{\prime\prime}  & 0 \\
0 & 0  & 1 \\
     \end{bmatrix};
       \begin{bmatrix}
0&  b_{12}^{\prime\prime} & b_{13}^{\prime\prime} \\
1 & b_{22}^{(4)}  & b_{23}^{\prime\prime} \\
0 & 1  & 0 \\
     \end{bmatrix}\right)
      \longrightarrow
       \left(\begin{bmatrix}
1 & 0  & a_{13}^{\prime\prime\prime} \\
0 & a_{22}^{\prime\prime}  & 0 \\
0 & 0  & 1 \\
     \end{bmatrix};
       \begin{bmatrix}
0&  0 & b_{13}^{\prime\prime} \\
1 & b_{22}^{(4)}  & b_{23}^{\prime\prime} \\
0 & 1  & 0 \\
     \end{bmatrix}\right).
\]

\vspace{3mm}

\noindent One final column operation can be used to make $a_{13}'''=0$. We can also scale the second row since $a_{22}''\neq 0$ (by the rank assumptions):

\[\longrightarrow
       \left(\begin{bmatrix}
1 & 0  & 0 \\
0 & a_{22}^{\prime\prime}  & 0 \\
0 & 0  & 1 \\
     \end{bmatrix};
       \begin{bmatrix}
0&  0 & b_{13}^{\prime\prime} \\
1 & b_{22}^{(4)}  & b_{23}^{\prime\prime\prime} \\
0 & 1  & 0 \\
     \end{bmatrix}\right)
        \longrightarrow
       \left(\begin{bmatrix}
1 & 0  & 0 \\
0 & 1  & 0 \\
0 & 0  & 1 \\
     \end{bmatrix};
       \begin{bmatrix}
0&  0 & b_{13}^{\prime\prime} \\
(a_{22}^{\prime\prime})^{-1} & (a_{22}^{\prime\prime})^{-1}b_{22}^{(4)}  & (a_{22}^{\prime\prime})^{-1}b_{23}^{\prime\prime\prime} \\
0 & 1  & 0 \\
     \end{bmatrix}\right).
     \]
 
 \vspace{3mm}
 
\noindent Let us relabel the entries by $w,x,y$ and $z$ to write the resulting tensor as   
     
\[
        \left(\begin{bmatrix}
1 & 0  & 0 \\
0 & 1  & 0 \\
0 & 0  & 1 \\
     \end{bmatrix};
       \begin{bmatrix}
0&  0 & w \\
x & y  & z \\
0 & 1  & 0 \\
     \end{bmatrix}\right).
\]

 \vspace{3mm}

Notice that if the second matrix (with entries $w,x,y$ and $z$) were diagonalizable, then the resulting tensor would have rank 3. This means that if we can find a vector $\vec{v}=(e,f,g)$ such that the matrix  

\[
       \begin{bmatrix}
0&  0 & w \\
x+e & y+f  & z+g \\
0 & 1  & 0 \\
     \end{bmatrix}
\]

\vspace{3mm}
 
\noindent is diagonalizable, then the tensor

\[
        \left(\begin{bmatrix}
1 & 0  & 0 \\
0 & 1  & 0 \\
0 & 0  & 1 \\
     \end{bmatrix};
       \begin{bmatrix}
0&  0 & w \\
x+e & y+f  & z+g \\
0 & 1  & 0 \\
     \end{bmatrix}\right)
     \]
     
\vspace{3mm}
      
\noindent would have rank 3, and $T$ could be written as the sum of a simple tensor (defined by $\vec{v}$) and a rank 3 tensor, showing again that $\rank(T)\leq 4$. Therefore, the problem reduces to choosing quaternions $x,y$ and $z$ such that the matrix

\[
       M=\begin{bmatrix}
0&  0 & w \\
x & y  & z \\
0 & 1  & 0 \\
     \end{bmatrix}
\]

 \vspace{3mm}
 
\noindent (where $w\not=0$ by the rank assumptions) is diagonalizable.

Set $w=a+bi+cj+dk$, and let us choose $x=u+vi$ to be a nonzero complex number and $y=z=0$. By Lemma \ref{adjoint}, the resulting matrix $M$ is diagonalizable if and only if its complex adjoint $\chi_M$ is. The ajoint of $M$ is the complex matrix

\[
       \chi_M=\begin{bmatrix}
0&  0 & a+bi &  0 & 0& c+di\\
u+vi&  0 & 0 &  0 & 0& 0\\
0&  1& 0 &  0 & 0 & 0\\
0&  0& -c+di &  0 & 0 & a-bi\\
0&  0 & 0 &  u-vi & 0 & 0\\
0&  0 & 0 &  0 & 1 & 0\\
     \end{bmatrix}.
\]

\vspace{3mm}
 
\noindent Its characteristic polynomial is 
\[ p_M(\lambda)={\lambda}^{6}-2(au-bv){\lambda}^{3}+(u^{2}+v^{2})(a^{2}+b^{2}+c^{2}+d^{2}).\]

\noindent Since $w\neq 0$, at least one of $a,b,c$ or $d$ is nonzero, and so $a^{2}+b^{2}+c^{2}+d^{2}\neq 0$. Choose $u,v\neq 0$ in $\mathbb{R}$ such that $au-bv=0$ (there are infinitely many choices here). Then $C=(u^{2}+v^{2})(a^{2}+b^{2}+c^{2}+d^{2})>0$ and 
\[p_M(\lambda)={\lambda}^{6}+ (u^{2}+v^{2})(a^{2}+b^{2}+c^{2}+d^{2})\]
has distinct roots given by $\sqrt[6]{C}\zeta_6^ki$ for $1\leq k\leq 6$, where $\zeta_6$ is a primitive sixth root of unity. Therefore $\chi_M$ is diagonalizable, completing the proof.
\end{proof}

\section{The $3\times 3\times 3$ case}

For real or complex $3\times 3\times 3$ tensors, it is known that the maximal rank is $5$  (see \cite[Section 3.4]{SMS2}). However, a general decomposition of such tensors into $5$ simple tensors is not provided. Although there are several subcases considered in \cite[Section 3.4]{SMS2}, we will only provide a decomposition for the main subcase (namely \cite[Equation 3.4.2]{SMS2}).

\begin{theorem}
Let $T$ be a complex $3\times 3\times 3$ tensor. Suppose that through a sequence of rank-preserving row and column operations, $T$ can be reduced to the form
\begin{align*}
T=\left(
  \begin{bmatrix}
    A_{11} & 0 & 1  \\
   0 & A_{22} & 0  \\
   1 &  0 & 0  \\
  \end{bmatrix};
\begin{bmatrix}
    B_{11} & 0 & 0  \\
   0 & B_{22} & 1  \\
   0 &  1 & 0  \\
  \end{bmatrix};
  \begin{bmatrix}
    C_{11} & C_{12} & 0  \\
    C_{21} & C_{22} & 0  \\
   0 & 0 & 0  \\
  \end{bmatrix}\right).
  \end{align*}

\hspace{5mm}

\noindent where $A_{11},B_{22},C_{22}\neq 0$ and $R =A_{11}C_{12}+B_{11}C_{22}\neq 0$. Then $T$ has a decomposition as the sum of the 5 simple tensors defined below.
\end{theorem}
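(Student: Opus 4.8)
The plan is to turn the decomposition question into a factorization problem via Lemma \ref{dia1}. Writing the three frontal slices as $S^{(1)}, S^{(2)}, S^{(3)}$, a decomposition of $T$ into $5$ simple tensors is exactly the same as producing five rank-one matrices (``dyads'') $M_i = \vec{a}_i\vec{c}_i^{\,T}$ for $1\le i\le 5$, together with weight vectors $\vec{b}_i = (b_{i1}, b_{i2}, b_{i3})$, so that $S^{(k)} = \sum_{i=1}^5 b_{ik} M_i$ for $k = 1,2,3$; equivalently, by Lemma \ref{dia1}, one exhibits a $3\times 5$ matrix $P$, a $5\times 3$ matrix $Q$, and three diagonal $5\times5$ matrices $D_k$ with $S^{(k)} = P D_k Q$. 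First I would record the structural features that constrain the answer: the three $(3,3)$-entries vanish, the ``corner'' ones sit at positions $(1,3),(3,1)$ (in slice $1$) and $(2,3),(3,2)$ (in slice $2$), and the only genuinely two-dimensional block is the top-left $2\times2$ block, which carries $A_{11},A_{22},B_{11},B_{22}$ and all four $C_{ij}$.

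The crux of the construction is that one cannot afford a separate dyad for each corner one: the four elementary border dyads $E_{13}, E_{31}, E_{23}, E_{32}$ together with a rank-$\le 3$ decomposition of the interior block would cost up to $4+3 = 7$ terms. Instead I would use dyads whose supports \emph{straddle} the border index $3$ and the interior indices $1,2$: a dyad $\vec{a}\vec{c}^{\,T}$ with $\vec{c} = e_3 + \vec{c}'$ (where $\vec{c}'\in\mathrm{span}(e_1,e_2)$) places $\vec{a}$ in column $3$ and $\vec{a}(\vec{c}')^{T}$ in the interior columns simultaneously, so a single dyad does double duty. Choosing $\vec{a}$ and $\vec{c}'$ (and their row-analogues) to match the corner ones while absorbing the interior entries leads to a small linear system for the remaining free parameters. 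The hypotheses $A_{11}, B_{22}, C_{22}\neq 0$ allow one to normalise and solve the diagonal relations, and the quantity $R = A_{11}C_{12} + B_{11}C_{22}$ appears as the pivot (determinant) of the final $2\times2$ subsystem coupling the top-left block to the border; requiring $R\neq 0$ is precisely what guarantees this subsystem is solvable and that the five dyads are well defined. This construction is where essentially all the work lies.

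Once the five simple tensors are written down, the proof is finished by a direct, if tedious, slice-by-slice check that $\sum_i b_{ik}M_i = S^{(k)}$ for each $k$, which I would relegate to inspection. I expect the main obstacle to be the construction itself — engineering border-straddling dyads that cover the interior block with only five terms, and pinning down $R$ as the exact nondegeneracy condition — rather than the verification. As a sanity check on the bound (and as a guide to the construction) I would note that peeling off one well-chosen simple tensor so that the third frontal slice becomes a combination of the first two, followed by a rank-preserving frontal-slice operation annihilating that slice, reduces $T$ to a tensor with only two nonzero frontal slices, i.e. effectively a $3\times2\times3$ tensor; by the complex analogue of Theorem \ref{323quaternion} this has rank at most $4$, so $\rank(T)\le 5$. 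This confirms the bound but does not by itself produce the clean explicit five-term formula, which is why the direct construction above is the route I would pursue.
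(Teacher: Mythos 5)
Your proposal stops short of the actual mathematical content of this theorem, and the fallback argument you offer in its place fails for the very tensors the theorem covers, so there is a genuine gap. The statement asserts that $T$ equals the sum of five simple tensors \emph{defined below}: its entire content is the explicit list. The paper's proof simply writes the five tensors down --- setting $S = C_{11}C_{22}-C_{12}C_{21}$, it takes a dyad supported on the third row of the first slice with entries $(1,\,-B_{22}^{-1}A_{22},\,0)$, a dyad supported on the second column of the first two slices, a dyad supported on the first row of all three slices carrying $A_{11}$, $B_{11}$ and $SC_{22}^{-1}$, a dyad supported on the third column of the last two slices, and a rank-one matrix in the third slice whose rows are proportional to $(C_{21},\,C_{22},\,-SR^{-1})$ --- and then verifies entry by entry that the sum is $T$; the hypotheses $A_{11},B_{22},C_{22},R\neq 0$ are exactly what make the inverses in these formulas meaningful. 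Your plan (recast the problem via Lemma \ref{dia1}, use border-straddling dyads, solve a small linear system whose pivot should be $R$) is a sensible description of how one might \emph{search} for such a decomposition, and it is in the same spirit as what the paper does; but you never exhibit a single dyad, never write down the linear system, and never show it is solvable precisely when $R\neq 0$. Since you yourself identify the construction as ``where essentially all the work lies,'' a proof that omits it has not proved the theorem.

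Moreover, your sanity check is not merely incomplete but false in the main case. Subtracting one simple tensor with slice contributions $(\alpha M,\beta M,\gamma M)$, where $M$ has rank one, and then asking that the third slice lie in the span of the first two amounts to requiring $S_3-\lambda S_1-\mu S_2=(\gamma-\lambda\alpha-\mu\beta)M$ for some scalars, i.e.\ that some combination $\lambda_1S_1+\lambda_2S_2+\lambda_3S_3$ with $\lambda_3\neq 0$ have rank at most $1$. For the tensor at hand this is impossible unless $S=0$: in any such combination the $2\times 2$ minor on rows $\{2,3\}$ and columns $\{2,3\}$ equals $-\lambda_2^2$ and the minor on rows $\{1,3\}$ and columns $\{1,3\}$ equals $-\lambda_1^2$, so rank at most $1$ forces $\lambda_1=\lambda_2=0$, leaving $\lambda_3S_3$, whose only possibly nonzero $2\times 2$ minor is $S=C_{11}C_{22}-C_{12}C_{21}$. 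Hence whenever $S\neq 0$ no single-tensor peeling can reduce $T$ to a two-slice tensor, and your claimed reduction to the $3\times 2\times 3$ case collapses. (Separately, the rank bound you invoke for complex two-slice tensors is a classical pencil result, not the quaternionic Theorem \ref{323quaternion} proved in the paper, so it would need its own justification.) To repair the proof you must carry out the construction you outline: produce the five dyads explicitly and check the sum --- which is precisely what the paper's proof consists of.
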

\begin{proof}

Let $S=C_{11}C_{22}-C_{12}C_{21}$. Let us define $T_1,\ldots,T_5$ by

\begin{align*}
& T_{1}=\left(
  \begin{bmatrix}
    0 & 0 & 0  \\
   0 & 0 & 0  \\
   1 &  -B_{22}^{-1}A_{22} & 0  \\
  \end{bmatrix};
\begin{bmatrix}
    0 & 0 & 0  \\
   0 & 0 & 0  \\
   0 & 0 & 0  \\
  \end{bmatrix};
  \begin{bmatrix}
    0 & 0 & 0  \\
   0 & 0 & 0  \\
   0 & 0 & 0  \\
  \end{bmatrix}\right),\\
& T_{2}=\left(
  \begin{bmatrix}
    0 & 0 & 0  \\
   0 & 0 & 0  \\
   0 & 0 & 0  \\
  \end{bmatrix};
\begin{bmatrix}
    0 & 0 & 0  \\
   0 & 0 & 0  \\
   0 & 0 & 0  \\
  \end{bmatrix};
  \begin{bmatrix}
     (C_{12}C_{22}^{-1})C_{21} & (C_{12}C_{22}^{-1})C_{22} & (C_{12}C_{22}^{-1})(-SR^{-1})  \\
   C_{21} & C_{22} & -SR^{-1}  \\
   0 & 0 & 0  \\
  \end{bmatrix}\right),\\
& T_{3}=\left(
  \begin{bmatrix}
     0 & 0 & 0  \\
   0 & A_{22} & 0  \\
   0 & B_{22}^{-1}A_{22} & 0  \\
  \end{bmatrix};
\begin{bmatrix}
     0 & 0 & 0  \\
   0 & B_{22} & 0  \\
   0 & 1 & 0  \\
  \end{bmatrix};
  \begin{bmatrix}
     0 & 0 & 0  \\
   0 & 0 & 0  \\
   0 & 0 & 0  \\
  \end{bmatrix}\right).\\
& T_{4}=\left(
  \begin{bmatrix}
     A_{11} & 0 & A_{11}A_{11}^{-1}  \\
   0 & 0 & 0  \\
   0 & 0 & 0  \\
  \end{bmatrix};
\begin{bmatrix}
     B_{11} & 0 & B_{11}A_{11}^{-1}  \\
   0 & 0 & 0  \\
   0 & 0 & 0  \\
  \end{bmatrix};
  \begin{bmatrix}
     SC_{22}^{-1} & 0 & SC_{22}^{-1}A_{11}^{-1}  \\
   0 & 0 & 0  \\
   0 & 0 & 0  \\
  \end{bmatrix}\right).\\
  & T_{5}=\left(
  \begin{bmatrix}
     0 & 0 & 0  \\
   0 & 0 & 0  \\
   0 & 0 & 0  \\
  \end{bmatrix};
\begin{bmatrix}
     0 & 0 & -B_{11}A_{11}^{-1}  \\
   0 & 0 & 1  \\
   0 & 0 & 0  \\
  \end{bmatrix};
  \begin{bmatrix}
     0 & 0 & -B_{11}A_{11}^{-1}SR^{-1}  \\
   0 & 0 & SR^{-1}  \\
   0 & 0 & 0  \\
  \end{bmatrix}\right),\\
\end{align*}
 Note that some entries in $T_2$ and $T_4$ have not been simplified to be makes it clear that $\rank(T_i)\leq 1$. It is not difficult to verify that $T=T_{1}+T_{2}+T_{3}+T_{4}+T_{5}$, proving the result.
\end{proof}

Proceeding as before, we now use the tensor bounds from previous sections to provide a bound on the rank for the $3\times 3\times 3$ quaternion case. The technique from Theorem \ref{323quaternion} does not easily generalize here (the characteristic polynomial would be considerably more complicated for example), but we can still put a basic bound on the rank of a $3\times 3\times 3$ tensor using Lemma \ref{sing}. The authors do not know whether this bound could be improved on using a more refined method. 

\begin{theorem}
Let $T$ be a $3\times 3\times 3$ quaternion tensor. Then $\rank(T)\leq 6$.
\end{theorem}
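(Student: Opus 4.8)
The plan is to reduce the $3\times 3\times 3$ tensor to a $2\times 3\times 3$ sub-tensor plus a single matrix-like slab, and to exploit Lemma \ref{sing} to force the leftover slab to be singular, so that it contributes only $2$ to the rank rather than $3$. I would regard $T$ through its three horizontal slices $A,B,C$ (each a $3\times 3$ quaternion matrix), since horizontal-slice operations act by left multiplication and so match the form $x_0A+B$ appearing in Lemma \ref{sing}.

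First I would dispose of the degenerate case: if $A$, $B$, and $C$ are all singular, then each has matrix rank at most $2$, and writing $T$ as the sum of its three single-horizontal-slice tensors gives $\rank(T)\le 2+2+2=6$. This uses that a tensor supported on a single horizontal slice has rank equal to the matrix rank of that slice, which is immediate from a rank factorization of a quaternion matrix into rank-one matrices.

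Otherwise some slice is invertible, and after relabelling I assume $A$ is. By Lemma \ref{sing} there is an $x_0\in\mathbb{H}$ with $x_0A+C$ singular. Replacing $C$ by $C':=x_0A+C$ is a rank-preserving horizontal-slice operation, so the resulting tensor $T'$ satisfies $\rank(T')=\rank(T)$ and has a singular third slice. I would then split $T'=T_1'+T_2'$, where $T_1'$ carries the slices $A,B$ together with a zero third slice, and $T_2'$ carries only $C'$. Here $T_1'$ is, up to the harmless zero slab, a $2\times 3\times 3$ tensor, so $\rank(T_1')\le 4$ by Theorem \ref{233quaternion}; and $\rank(T_2')=\rank(C')\le 2$ since $C'$ is singular. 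Subadditivity of rank then yields $\rank(T)=\rank(T')\le 4+2=6$.

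The main obstacle is not any single computation but rather beating the naive estimate: splitting $T$ straightforwardly into a $2\times 3\times 3$ tensor and one residual $3\times 3$ slab gives only $4+3=7$. The crux is that Lemma \ref{sing} lets one arrange, at no cost to the total rank, for the residual slab to be singular, which shaves off the final unit. A secondary care-point is confirming that the left-multiplication convention governing horizontal-slice operations is exactly the form of combination supplied by Lemma \ref{sing}, and that padding the $2\times 3\times 3$ decomposition with a zero slab does not change its rank.
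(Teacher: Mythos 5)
Your proposal is correct and follows essentially the same route as the paper: decompose along horizontal slices, use Lemma \ref{sing} to replace the third slice by the singular combination $x_0A+C$ via a rank-preserving horizontal-slice operation, bound the remaining $2\times 3\times 3$ part by $4$ using Theorem \ref{233quaternion}, and add $\rank(C')\le 2$. The paper's proof is organized identically, down to the degenerate case where all three slices have rank at most $2$.
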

\begin{proof}
Let

 \[
   T=\left(
      \begin{bmatrix}
a_{11} & a_{12}  & a_{13} \\
b_{11} & b_{12}  & b_{13} \\
c_{11} & c_{12}  & c_{13} \\
     \end{bmatrix};
  \begin{bmatrix}
a_{21} & a_{22}  & a_{23} \\
b_{21} & b_{22}  & b_{23} \\
c_{21} & c_{22}  & c_{23} \\
     \end{bmatrix};
       \begin{bmatrix}
a_{31} & a_{32}  & a_{33} \\
b_{31} & b_{32}  & b_{33} \\
c_{31} & c_{32}  & c_{33} \\
  \end{bmatrix}\right),
\]

\vspace{3mm}

\noindent and write $T$ as the sum $T_{1}+T_{2}+T_{3}$, where 

 \[
   T_{1}=\left(
       \begin{bmatrix}
a_{11} & a_{12}  & a_{13} \\
0 & 0  & 0 \\
0 & 0  & 0 \\
     \end{bmatrix};
  \begin{bmatrix}
a_{21} & a_{22}  & a_{23} \\
0 & 0  & 0 \\
0 & 0  & 0 \\
     \end{bmatrix};
       \begin{bmatrix}
a_{31} & a_{32}  & a_{33} \\
0 & 0  & 0 \\
0 & 0  & 0 \\
  \end{bmatrix}\right),
\]
 \[
   T_{2}=\left(
      \begin{bmatrix}
0 & 0  & 0 \\
b_{11} & b_{12}  & b_{13} \\
0 & 0  & 0 \\
     \end{bmatrix}
  \begin{bmatrix}
0 & 0  & 0 \\
b_{21} & b_{22}  & b_{23} \\
0 & 0  & 0 \\
     \end{bmatrix};
       \begin{bmatrix}
0 & 0  & 0 \\
b_{31} & b_{32}  & b_{33} \\
0 & 0  & 0 \\
  \end{bmatrix}\right),
\]
 \[
   T_{3}=\left(
       \begin{bmatrix}
0 & 0  & 0 \\
0 & 0  & 0 \\
c_{11} & c_{12}  & c_{13} \\
     \end{bmatrix}
  \begin{bmatrix}
0 & 0  & 0 \\
0 & 0  & 0 \\
c_{21} & c_{22}  & c_{23} \\
     \end{bmatrix};
      \begin{bmatrix}
0 & 0  & 0 \\
0 & 0  & 0 \\
c_{31} & c_{32}  & c_{33} \\
  \end{bmatrix}\right).
\]

\vspace{3mm}

\noindent Consider matrices

 \[
   A=
  \begin{bmatrix}
a_{11} & a_{12}  & a_{13} \\
a_{21} & a_{22}  & a_{23} \\
a_{31} & a_{32}  & a_{33} \\
  \end{bmatrix},\ \ \ 
  B=
   \begin{bmatrix}
b_{11} & b_{12}  & b_{13} \\
b_{21} & b_{22}  & b_{23} \\
b_{31} & b_{32}  & b_{33} \\
  \end{bmatrix}, \ \ \
    C=
   \begin{bmatrix}
c_{11} & c_{12}  & c_{13} \\
c_{21} & c_{22}  & c_{23} \\
c_{31} & c_{32}  & c_{33} \\
  \end{bmatrix}.
\]

\vspace{3mm}

\noindent Notice that $\rank(T_{1})=\rank(A)$,  $\rank(T_{2})=\rank(B)$, and $\rank(T_{3})=\rank(C)$, so if $A$, $B$ and $C$ all have rank at most 2, then
\[ \rank(T)\leq \rank(T_{1})+\rank(T_{2})+\rank(T_{3})\leq 2+2+2=6.\]
Therefore, without loss of generality, we can assume that $\rank(A)=3$. By Lemma \ref{sing}, there exists $x_{0}\in \mathbb{H}$ such that

 \[
   D=x_{0}A+C=
  \begin{bmatrix}
d_{11} & d_{12}  & d_{13} \\
d_{21} & d_{22}  & d_{23} \\
d_{31} & d_{32}  & d_{33} \\
  \end{bmatrix}
\]

\vspace{3mm}

\noindent is singular. Using rank-preserving row operations, we can reduce $T$ to 

 \[
   T
\longrightarrow
   \left(
    \begin{bmatrix}
a_{11} & a_{12}  & a_{13} \\
b_{11} & b_{12}  & b_{13} \\
d_{11} & d_{12}  & d_{13} \\
     \end{bmatrix}
  \begin{bmatrix}
a_{21} & a_{22}  & a_{23} \\
b_{21} & b_{22}  & b_{23} \\
d_{21} & d_{22}  & d_{23} \\
     \end{bmatrix};
      \begin{bmatrix}
a_{31} & a_{32}  & a_{33} \\
b_{31} & b_{32}  & b_{33} \\
d_{31} & d_{32}  & d_{33} \\
  \end{bmatrix} \right).
\]

\vspace{3mm}

\noindent Notice that the first 2 horizontal slices $T_{1}+T_{2}$ can viewed as a $2\times 3\times  3$ tensor, which has rank at most 4 by Theorem \ref{233quaternion}, and the last slice 

 \[
   \left(
 \begin{bmatrix}
0 & 0  & 0 \\
0 & 0  & 0 \\
d_{11} & d_{12}  & d_{13} \\
     \end{bmatrix};
  \begin{bmatrix}
0 & 0  & 0 \\
0 & 0  & 0 \\
d_{21} & d_{22}  & d_{23} \\
     \end{bmatrix};
       \begin{bmatrix}
0 & 0  & 0 \\
0 & 0  & 0 \\
d_{31} & d_{32}  & d_{33} \\
  \end{bmatrix} \right)
\]

\vspace{3mm}

\noindent has the same rank as the matrix $D$. Therefore, we have 

\[\rank(T)\leq \rank(T_{1}+T_{2})+\rank(D)\leq 4+2=6.\]
\end{proof}

\section*{Acknowledgements}

We thank the anonymous referee for their valuable comments.

\bibliographystyle{plain}
\bibliography{Library}

\end{document}